\documentclass{amsart}
\usepackage{color, latexsym, graphicx, amsmath, amssymb, cite, hyperref, tikz-cd}
\usepackage[margin=1.5in]{geometry}

\newtheorem{theorem}{Theorem}[section]
\newtheorem{lemma}[theorem]{Lemma}
\newtheorem{proposition}[theorem]{Proposition}

\theoremstyle{definition}
\newtheorem{definition}[theorem]{Definition}
\newtheorem{example}[theorem]{Example}

\theoremstyle{remark}

\numberwithin{equation}{section}

\newcommand{\R}{{\mathbb R}}

\newcommand{\Z}{{\mathbb Z}}

\newcommand{\N}{{\mathbb N}}
\newcommand{\T}{{\mathbb T}}

\newcommand{\supp}{\operatorname{supp}}
\newcommand{\ord}{\operatorname{ord}}
\newcommand{\WF}{\operatorname{WF}}

\newcommand{\inj}{\operatorname{inj}}
\newcommand{\esssupp}{\operatorname{ess\, supp}}

\newcommand{\vol}{\operatorname{vol}}

\title{Polygons and multi-product of eigenfunctions}

\author{Emmett L. Wyman}
\address{Department of Mathematics and Statistics, Binghamton University, Binghamton NY}
\email{ewyman@binghamton.edu}

\author{Yakun Xi}
\address{School of Mathematical Sciences, Zhejiang University, Hangzhou 310027, China}
\email{yakunxi@zju.edu.cn}
\author{Yi Zhang}
\address{School of Mathematical Sciences, Zhejiang University, Hangzhou 310027, China}
\email{12335009@zju.edu.cn}

\begin{document}

\begin{abstract}
Let $M$ be a compact Riemannian manifold without boundary, with $L^2$-normalized Laplace-Beltrami eigenfunctions $\{e_j\}_j$, which satisfy $\Delta_g e_j = -\lambda_j^2 e_j$. We study the following inner product of eigenfunctions
\[
	\langle e_{i_1} e_{i_2} \ldots e_{i_k}, e_{i_{k+1}} \rangle = \int e_{i_1} e_{i_2}\ldots e_{i_k} \overline{e_{i_{k+1}}} \, dV.
\]
We show that, after a mild averaging in the frequency variables, the main $\ell^2$-concentration of this inner product is determined by the measure of a set of configurations of $(k+1)$-gons whose side lengths are the frequencies $\lambda_{i_1}, \lambda_{i_2}, \dots, \lambda_{i_{k+1}}$. We prove that a rapidly vanishing proportion of this mass lies in the regime where $\lambda_{i_1}, \lambda_{i_2}, \dots, \lambda_{i_{k+1}}$ cannot occur as the side lengths of any $(k+1)$-gon.

\end{abstract}
\maketitle

\section{Introduction} 

\subsection{Background}

Let $M$ be a compact Riemannian manifold without boundary, and $e_1, e_2, \ldots$ be an orthonormal basis of Laplace-Beltrami eigenfunctions with
\[
	\Delta_g e_j = -\lambda_j^2 e_j,
\]
where $\lambda_j$ is the frequency of $e_j$.

We are interested in the general behavior of the multi-product of $k$ eigenfunctions $e_{i_1} e_{i_2} \dots e_{i_k}$. To analyze this multi-product more conveniently, we express it as a sum of eigenfunctions with coefficients by a harmonic expansion
\begin{equation}\label{harmonic expansion}
	e_{i_1} e_{i_2} \dots e_{i_k} = \sum_{i_{k+1}} \langle e_{i_1} e_{i_2} \dots e_{i_k}, e_{i_{k+1}} \rangle e_{i_{k+1}}.
\end{equation}
The classical question is to study the decay of the $L^2$ norm of $e_{i_1} e_{i_2} \dots e_{i_k}$. If we set $\lambda_{i_1}=\lambda_{i_2}=\dots =\lambda_{i_k}=\lambda_j$, then the problem reduces to study the $L^{2k}$ norm of the eigenfunction $e_{\lambda_j}$. For this problem, Sogge provided a complete answer for $k\geq 1$, see \cite{MR930395}. 
However, for general eigenvalues $\lambda_{i_1}, \lambda_{i_2}, \ldots, \lambda_{i_k}$, the situation is more complicated. By orthogonality, we can write the $L^2$ norm of \eqref{harmonic expansion} as follows:
\begin{equation}\label{rewrite harmonic expansion}
	\|e_{i_1} e_{i_2} \cdots e_{i_k}\|_{L^2}^2 = \sum_{i_{k+1}\in\mathbb Z^+} \bigl|\langle e_{i_1} e_{i_2} \cdots e_{i_k}, e_{i_{k+1}} \rangle\bigr|^2.
\end{equation}
The decay of the terms on the right-hand side therefore determines the behavior of the left-hand side. We thus aim to study both the decay and how this decay is distributed in \eqref{rewrite harmonic expansion}. This leads to the following concrete problem: under what conditions on the frequencies $\lambda_{i_1}, \lambda_{i_2}, \dots, \lambda_{i_{k+1}}$ do the coefficients $\langle e_{i_1} e_{i_2} \cdots e_{i_k}, e_{i_{k+1}} \rangle$ exhibit decay, and how is their $\ell^2$-mass distributed as these frequencies vary?

When $k=2$, as in many spectral problems, this question is closely connected to number theory in the automorphic setting. Sarnak showed that for fixed $j$ the coefficients $\langle e_j^2, e_\ell \rangle$ decay exponentially in $\lambda_\ell$ \cite{MR1277052}. Bernstein, Reznikov, Kr\"otz, and Stanton obtained the optimal estimate \cite{MR1715328, MR2081437}. For the more general coefficients $\langle e_i e_j, e_\ell \rangle$, a classical decay estimate in terms of the eigenvalues was given by Lu, Sogge, and Steinerberger \cite{MR3997636}. More recently, the first author improved their estimate \cite{MR4376455}. Given $\epsilon > 0$ and an integer $N \geq 1$, there exists a constant $C_{\epsilon,N}$ for which
\begin{equation}\label{Emmett conclusion}
	\sum_{\lambda_\ell \geq (2+\epsilon)\lambda} |\langle e_i e_j, e_\ell \rangle|^2 \leq C_{\epsilon, N} \lambda^{-N} \qquad \text{for all $i$, $j$ with $\lambda_i, \lambda_j \leq \lambda$.}
\end{equation}

Furthermore, he proved that the $(2+\epsilon)\lambda$ factor is nearly optimal. He also obtained an asymptotic expansion for the non-rapidly decaying part \cite{MR4376455}. Building on this work, we study the multi-product of $(k+1)$ eigenfunctions and obtain analogous results. 
One of the most striking features of \cite{MR4376455} is that it connects triple products of eigenfunctions to geometric configurations, namely triangles. We extend this perspective to general $k \geq 2$, where the corresponding configurations are naturally described by polygons.

\subsection{Polygons}

Based on \eqref{rewrite harmonic expansion}, we study the sums
\begin{equation}\label{single sums}
	\sum_{(\lambda_{i_1}, \lambda_{i_2}, \dots ,\lambda_{i_{k+1}}) = (a_1, a_2, \dots ,a_{k+1})} \bigl|\langle e_{i_1} e_{i_2} \cdots e_{i_k}, e_{i_{k+1}} \rangle\bigr|^2
\end{equation}
for given eigenvalues $a_1, a_2, \dots, a_{k+1}$. These sums are independent of the choice of eigenbasis; hence their behavior is determined solely by the underlying manifold and its metric.

The main point of the present paper is to show that the sum \eqref{single sums} can be estimated by counting configurations of $(k+1)$-gons with side lengths $a_1, a_2, \dots, a_{k+1}$. To state our results, we first introduce the following definition concerning the side lengths of a $(k+1)$-gon.

\begin{definition}\label{k+1-good and k+1-gon-bad}
A vector $\tau = (\tau_1, \ldots, \tau_{k+1}) \in (0,\infty)^{k+1}$ is called $k$-good if it satisfies the following two conditions.
\begin{enumerate}
\item the $(k+1)$-gon inequality: for every $1 \leq j \leq k+1$,
\begin{equation}\label{2.1}
\tau_j < \sum_{\ell \neq j} \tau_{\ell}.
\end{equation}
\item the nondegeneracy condition: for every sign vector $\varepsilon = (\varepsilon_1, \ldots, \varepsilon_{k+1}) \in \{ \pm 1\}^{k+1}$,
\begin{equation}\label{2.2}
|\varepsilon \cdot \tau| = \left|\sum_{j=1}^{k+1} \varepsilon_j \tau_j\right| > 0.
\end{equation}
\end{enumerate}
We denote the set of $k$-good vectors by $\Gamma_g$.

A vector $\tau = (\tau_1, \ldots, \tau_{k+1}) \in (0,\infty)^{k+1}$ is called $k$-bad if there exists $j$, $1 \leq j \leq k+1$, such that
\begin{equation}\label{2.3}
\tau_j > \sum_{\ell \neq j} \tau_{\ell}.
\end{equation}
We denote the set of $k$-bad vectors by $\Gamma_b$.

A closed cone $\Gamma \subset (0,\infty)^{k+1}$ is called $k$-good if $\Gamma \subset \Gamma_g$, and it is called $k$-bad if $\Gamma \subset \Gamma_b$.
\end{definition}

The set of $\tau$ satisfying condition (1) consists of side-length data that can be realized by a $(k+1)$-gon in the plane, whereas the set of $k$-bad vectors contains no such points. The set of $k$-good vectors is obtained from the set of $\tau$ satisfying condition (1) by removing those $\tau$ for which a linear combination $|\varepsilon \cdot \tau|$ vanishes for some $\varepsilon \in \{ \pm 1\}^{k+1}$. For example, when $k=3$, the vector $(2,2,2,2)$ satisfies the $(k+1)$-gon inequality, but $(2,2,2,2)\notin \Gamma_g$ since it fails the nondegeneracy condition. Concretely, consider the degenerate quadrilateral in the plane formed by the vectors $(2,0), (-2,0), (2,0), (-2,0)$. Each side has length $2$, but the polygon is collinear and has zero area. In this case, $\tau \notin \Gamma_g$, as illustrated in Figure \ref{4}. When $k=2$, the nondegeneracy condition is automatic once one assumes condition (1), due to the rigidity of triangles. This is the primary reason it was not clear whether triple-product results could be generalized to higher multi-products, where such rigidity is absent. Our nondegeneracy condition is designed to fill this gap. As we will see in the proof, it ensures that several crucial steps go through and yields clean asymptotics.

\begin{figure}
\includegraphics[width=0.6\textwidth]{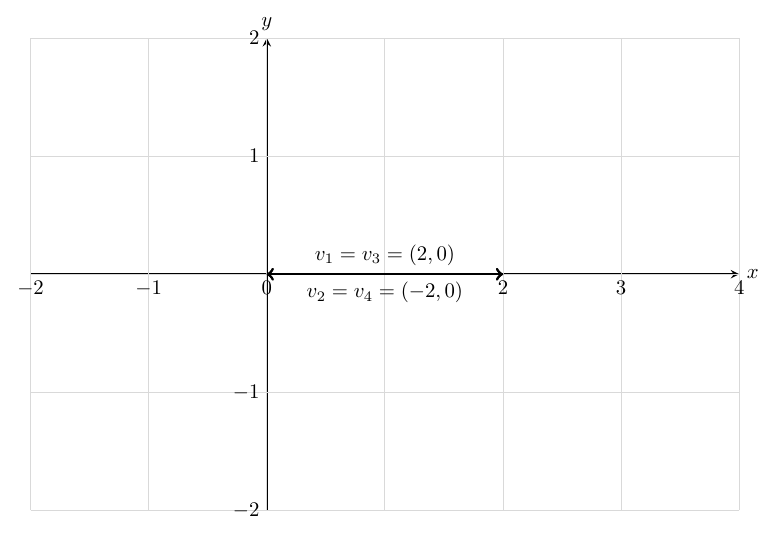}
\caption{An example of $\tau\notin\Gamma_g$.}
\label{4}
\end{figure}
In what follows, we establish a connection between \eqref{single sums} and the count of configurations of $(k+1)$-gons using the spectral projection operator $\chi_\lambda$ and the theory of Fourier integral operators. Our argument also shows that \eqref{single sums} decays rapidly in the regime where $(\lambda_{i_1}, \lambda_{i_2}, \dots, \lambda_{i_{k+1}})$ fails to satisfy the $(k+1)$-gon inequality.

\subsection{Main results}
Our first theorem shows that the rapidly decaying contribution to \eqref{single sums} comes from frequencies $(\lambda_{i_1}, \lambda_{i_2}, \dots, \lambda_{i_{k+1}})$ whose coordinates cannot occur as the side lengths of a $(k+1)$-gon.

\begin{theorem}\label{main theorem k+1-gon-bad}
	For each $\epsilon>0$ and integer $N > 0$, there exists a constant $C_{\epsilon, N}$ such that for any positive numbers $\tau_{1},\tau_{2},\ldots,\tau_{k}$, we have
	\[
		\sum_{\lambda_{i_{k+1}}\geq (1+\epsilon)(\tau_1+\tau_2+\dots+\tau_k)} \bigl|\langle e_{i_1} e_{i_2} \cdots e_{i_k}, e_{i_{k+1}} \rangle\bigr|^2 \leq C_{\epsilon, N}(\tau_1+\tau_2+\dots+\tau_k)^{-N},
	\]
	where $\lambda_{i_j} \leq \tau_j$ for $1 \leq j \leq k$.
\end{theorem}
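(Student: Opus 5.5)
Throughout, write $\lambda=\tau_1+\cdots+\tau_k$ and let $\lambda_{i_j}\le\tau_j$ for $1\le j\le k$. The plan is to generalize the argument behind \eqref{Emmett conclusion} by exploiting finite propagation speed of the wave group. The heuristic is that $e_{i_1}\cdots e_{i_k}$ is "band-limited" at frequency about $\lambda$, so its mass above $(1+\epsilon)\lambda$ should be tiny. To make this precise, fix an even Schwartz function $\rho\in\mathcal S(\R)$ with $\hat\rho$ supported in $(-\delta,\delta)$ for $\delta$ smaller than the injectivity radius, and $\rho(0)=1$. For an eigenfunction $e_j$ with frequency $\lambda_j$ we have the reproducing identity
\[
\rho(T(\lambda_j-P))e_j=e_j,\qquad P=\sqrt{-\Delta_g}.
\]
Plan: it suffices to bound $\|(I-\Pi_{\le(1+\epsilon)\lambda})(e_{i_1}\cdots e_{i_k})\|_{L^2}$, where $\Pi$ denotes spectral projection, since this norm squared equals the sum in Theorem~\ref{main theorem k+1-gon-bad}. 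The main step is to show $\|\Pi_{\mu}(e_{i_1}\cdots e_{i_k})\|$ decays rapidly for $\mu\ge(1+\epsilon)\lambda$. Here $\Pi_\mu$ is the projection onto $[\mu,\mu+1)$, which we smooth to $\chi_\mu=\rho(\mu-P)$.

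Step 1 (reduction to an oscillatory integral). Write
\[
\langle e_{i_1}\cdots e_{i_k},e_{i_{k+1}}\rangle=\Bigl\langle \prod_{j\le k}\chi_{\lambda_{i_j}}e_{i_j},\ \chi_{\lambda_{i_{k+1}}}e_{i_{k+1}}\Bigr\rangle .
\]
Cover $M$ by a partition of unity of small coordinate patches and use the Hörmander parametrix. Modulo $O(\lambda^{-\infty})$ errors, each $\chi_{\mu}$ has kernel
\[
\mu^{\frac{n-1}{2}}\sum_\pm a_\pm(x,y,\mu)e^{\pm i\mu d_g(x,y)}.
\]
Equivalently, in local coordinates we write $\chi_\mu=\int e^{i\langle x-y,\xi\rangle}\,\hat\rho\text{-smoothed}\,(\mu-|\xi|_g)\,d\xi$. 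Then $\chi_\mu f$ has frequency support essentially in the shell $\bigl||\xi|_{g(x)}-\mu\bigr|\lesssim1$.

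Step 2 (frequency localization of the product). Each factor $e_{i_j}$, being fixed by $\chi_{\lambda_{i_j}}$, has local Fourier transform (after cutoff) concentrated, modulo rapidly decaying tails, in $|\xi|\le\lambda_{i_j}+C$. To make this rigorous, we use that $\psi e_{i_j}$ satisfies $\|\widehat{\psi e_{i_j}}(\xi)\|\le C_N\langle\xi\rangle^{-N}$ for $|\xi|\ge(1+\epsilon/4)\lambda_{i_j}+C$. This is non-stationary phase applied to the parametrix for $\rho(T(\lambda-P))$, together with $\|e_j\|_{L^\infty}\lesssim\lambda_j^{(n-1)/2}$. The product's local Fourier transform is then a convolution. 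Its support is essentially contained in
\[
|\xi|\le\sum_j\lambda_{i_j}+kC\le(1+\epsilon/2)\lambda
\]
for $\lambda$ large. The constant $C$ is harmless once $\lambda\gtrsim1/\epsilon$, and small $\lambda$ is trivially absorbed into $C_{\epsilon,N}$ using Sobolev bounds on the product. Finally, $\chi_\mu$ for $\mu\ge(1+\epsilon)\lambda$ acts microlocally on $|\xi|\approx\mu$. Its symbol vanishes to infinite order on that region, so $\|\chi_\mu(\psi e_{i_1}\cdots e_{i_k})\|\le C_N\mu^{-N}$.

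Step 3 (summation). Summing $\mu^{-2N}$ over the unit shells $\mu\ge(1+\epsilon)\lambda$, and using polynomial bounds on the number of eigenvalues in each shell (Weyl), gives $C_{\epsilon,N}\lambda^{-N}$. Passing from $\chi_\mu$ to the sharp sum costs nothing, because $\rho\ge c>0$ near $0$ after choosing $\rho$ appropriately, e.g.\ $\rho=|\tilde\rho|^2$.

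The main obstacle is Step 2. There, the frequency localization of $\psi e_{i_j}$ must hold uniformly in $j$ with errors decaying in the output frequency $|\xi|$, not just in $\lambda_{i_j}$; some $\lambda_{i_j}$ may be tiny while $\lambda$ is large. I would handle this by integrating by parts in $x$ in the parametrix representation. Its phase $\langle x,\xi\rangle\pm\lambda_{i_j}d_g(x,y)$ has gradient at least $|\xi|-\lambda_{i_j}\ge\epsilon|\xi|/4$, and this yields the needed $\langle\xi\rangle^{-N}$ gain with polynomial losses in $\lambda_{i_j}\le\lambda$.
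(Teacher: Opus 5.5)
Your argument is sound in outline, but it is organized differently from the paper's.

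\textbf{The paper's route.} It works intrinsically on $M$ and bounds each coefficient individually. All $k+1$ eigenfunctions are written via the parametrix \eqref{2.4} with $\supp\hat\chi\subset(1,2)$, so every amplitude lives on $1<d_g(x,y_j)<2$ and there is a single phase $\sum_{j\le k}\lambda_{i_j}d_g(x,y_j)-\lambda_{i_{k+1}}d_g(x,y_{k+1})$. Because $|\nabla_x d_g|_g=1$, the triangle inequality in $T_x^*M$ bounds its $x$-gradient below by $\frac{\epsilon}{1+\epsilon}\lambda_{i_{k+1}}$. One integration by parts in $x$ then gives $O(\lambda_{i_{k+1}}^{-N})$ per coefficient, and Weyl's law ($\lesssim T_m^{n-1}$ eigenvalues per unit window) handles the summation.

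\textbf{Your route.} You instead prove that the product itself is band-limited in $L^2$: Euclidean frequency localization of each $\psi e_{i_j}$, additivity of frequencies under convolution, then annihilation by $\chi_\mu$. This yields $\|\Pi_{[\mu,\mu+1)}(e_{i_1}\cdots e_{i_k})\|_{L^2}=O(\mu^{-N})$ directly, so Weyl's law is not actually needed in your Step 3. It also applies essentially verbatim to products of quasimodes. The price is two non-stationary phase arguments instead of one, plus bookkeeping that the intrinsic argument avoids.

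Three points need to be made explicit.

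(i) Your bound ``gradient at least $|\xi|-\lambda_{i_j}$'' treats $\nabla_x d_g$ as having Euclidean length at most $1$. That holds only up to a factor $1+O(r)$, in coordinates with $g(x_0)=I$ on patches of radius $r$. So the partition of unity and the coordinates must be chosen depending on $\epsilon$. The same comparison is needed to match the product's Euclidean bound $(1+\epsilon/2)\lambda$ against the localization of $\chi_\mu$ to $|\xi|_g\approx\mu$.

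(ii) With $\supp\hat\rho\subset(-\delta,\delta)$, the kernel of $\rho(\mu-P)$ has size $\mu^{n-1}$ on the diagonal. It is not of the form $\mu^{(n-1)/2}a\,e^{\pm i\mu d_g}$ with bounded amplitude. Take $\supp\hat\rho\subset(1,2)$ as the paper does. Modulating a nonnegative bump, $\rho(s)=e^{3is/2}\sigma(s)$, still gives $|\rho|\ge c$ on $[-1,1]$, which is what Step 3 needs.

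(iii) ``Its symbol vanishes to infinite order'' is not a symbolic-calculus statement, since $\rho(\mu-p)$ is not a symbol uniformly in $\mu$. Justify it by non-stationary phase in $y$ for $\int a(x,y)e^{i\mu d_g(x,y)+i\langle y,\eta\rangle}\tilde\psi(y)\,dy$. This is $O(\mu^{-N})$ when $|\eta|\le\frac{1+\epsilon/2}{1+\epsilon}\mu$ and $O(\langle\eta\rangle^{-N})$ when $|\eta|\ge 2\mu$. Your tail bound on $\hat f$ covers the range in between. Written this way, your final step is exactly the paper's integration by parts, with the plane wave $e^{i\langle y,\eta\rangle}$ standing in for the parametrix of $e_{i_{k+1}}$.
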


The hypothesis $\lambda_{i_{k+1}} \geq (1+\epsilon)(\tau_1+\cdots+\tau_k)$ forces the $(k+1)$-tuple $(\tau_1,\ldots,\tau_k,\lambda_{i_{k+1}})$ to be $k$-bad, since the last entry dominates the sum of the others. In particular, any $(\lambda_{i_1},\ldots,\lambda_{i_k},\lambda_{i_{k+1}})$ with $\lambda_{i_j} \leq \tau_j$ for $1 \leq j \leq k$ is also $k$-bad.

For clarity, we do not organize the left-hand side by summing over all tuples $(\lambda_{i_1},\ldots,\lambda_{i_{k+1}})$ lying in a specified region of $(0,\infty)^{k+1}$. Instead, we fix upper bounds $\tau_j$ for $\lambda_{i_j}$, $1 \leq j \leq k$, and then estimate the tail sum over those $i_{k+1}$ with $\lambda_{i_{k+1}} \geq (1+\epsilon)(\tau_1+\cdots+\tau_k)$. The proof uses the method of non-stationary phase.
We record a flat-torus example in Section \ref{sec:flat-torus-example} showing that the frequency threshold in Theorem \ref{main theorem k+1-gon-bad} is essentially sharp.

Our second and main theorem concerns the contribution from $k$-good vectors $\tau \in (0,\infty)^{k+1}$.
 Following \cite{MR4376455}, we introduce the joint spectral measure
\begin{equation}\label{joint spectral measure}
	\mu = \sum_{i_1,i_2,\dots,i_{k+1}} \bigl|\langle e_{i_1} e_{i_2} \cdots e_{i_k}, e_{i_{k+1}} \rangle\bigr|^2 \, \delta_{(\lambda_{i_1},\lambda_{i_2},\dots,\lambda_{i_{k+1}})}
\end{equation}
on $\R^{k+1}$, where $\delta$ denotes the Dirac measure. Thus $\mu$ is a weighted sum of point masses, with weights given by the squared magnitudes of the coefficients $\langle e_{i_1} e_{i_2} \cdots e_{i_k}, e_{i_{k+1}} \rangle$. Our second theorem describes the mass of $\mu$ carried by frequencies $(\lambda_{i_1}, \lambda_{i_2}, \dots, \lambda_{i_{k+1}})$ lying in $k$-good cones.

Before stating the result, we introduce some terminology by recalling the definition and basic properties of the Leray density. Let $F : \R^n \to \R^d$ be a smooth function, and let $y_0 \in \R^d$ be such that the differential $dF$ has full rank at every point of the level set $F^{-1}(y_0)$. By the inverse function theorem, $F^{-1}(y_0)$ is a smooth $(n-d)$-dimensional submanifold. Choose local coordinates $(z,z')$ in a neighborhood of a point of $F^{-1}(y_0)$ so that $z$ parametrizes $F^{-1}(y_0)$ and $\det(\partial F/\partial z') \neq 0$. The Leray density on $F^{-1}(y_0)$ is defined by
\begin{equation}\label{Leray density definition}
	d_F = \left|\det \frac{\partial F}{\partial z'}\right|^{-1} \, dz,
\end{equation}
and is independent of the choice of complementary coordinates $z'$. The corresponding Leray volume is
\[
	\vol F^{-1}(y_0) = \int_{F^{-1}(y_0)} d_F.
\]

Given a continuous function $f$ defined on a neighborhood of $F^{-1}(y_0)$, the integral of $f$ over $F^{-1}(y_0)$ with respect to the Leray measure can be expressed using the Dirac distribution as
\[
	\int_{F^{-1}(y_0)} f \, d_F = \int_{\R^n} f(x) \delta(F(x) - y_0) \, dx.
\]

For the purposes of this paper, we set $F : \R^{kn} \to \R^{k+1}$ by
\begin{equation}\label{def F}
	F(\xi_1,\xi_2,\dots,\xi_k) = \bigl(|\xi_1|,|\xi_2|,\dots,|\xi_k|,\ |\xi_1 + \cdots + \xi_k|\bigr).
\end{equation}
Then the level set $F^{-1}(\tau)$ consists of $k$-tuples $(\xi_1,\xi_2,\dots,\xi_k)$ satisfying
\[
|\xi_j|=\tau_j \ \ (1\le j\le k)
\qquad \text{and} \qquad
|\xi_1+\cdots+\xi_k|=\tau_{k+1}.
\]
Such a $k$-tuple determines a closed $(k+1)$-gon in $\R^n$ with side lengths $\tau_1,\tau_2,\dots,\tau_{k+1}$, where the final side is the closing vector $-(\xi_1+\cdots+\xi_k)$.
The next theorem provides a description of the concentration of $\mu$ in $k$-good cones.

\begin{theorem}\label{main theorem k+1-good}
Let $\rho$ be a Schwartz function on $\R^{k+1}$ with $\int \rho = 1$. Assume that
\[
	\supp \hat \rho \subset (-\inj M, \inj M)^{k+1}.
\]
Let $\Gamma$ be a $k$-good cone. Then for $\tau = (\tau_1, \tau_2, \dots, \tau_{k+1}) \in \Gamma$,
\[
	\rho * \mu(\tau) = (2\pi)^{-kn} \vol M \vol F^{-1}(\tau) + O(|\tau|^{k(n-1)-2}),
\]
where the constants implicit in the big-$O$ notation depend on $M$, $\Gamma$, and $\rho$ but not on $\tau$.
\end{theorem}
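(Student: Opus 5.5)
We follow the scheme of \cite{MR4376455} for triple products. First we write $\rho*\mu$ via Fourier inversion in terms of half-wave propagators. Since $\mu$ is a sum of point masses,
\[
\rho*\mu(\tau)=(2\pi)^{-(k+1)}\int_{\R^{k+1}} \hat\rho(t)\, e^{i t\cdot\tau}\sum_{i_1,\dots,i_{k+1}} |\langle e_{i_1}\cdots e_{i_k},e_{i_{k+1}}\rangle|^2 e^{-i t\cdot(\lambda_{i_1},\dots,\lambda_{i_{k+1}})}\,dt .
\]
Expanding the square, the inner sum equals
\[
\int_{M}\int_{M} \prod_{j=1}^k U(-t_j)(x,y)\;\overline{U(-t_{k+1})(x,y)}\,dx\,dy ,
\]
where $U(t)=e^{it\sqrt{-\Delta_g}}$ has kernel $\sum_i e^{it\lambda_i}e_i(x)\overline{e_i(y)}$; signs and conjugations will be fixed in the computation. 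Since $\supp\hat\rho\subset(-\inj M,\inj M)^{k+1}$, for $x$ near $y$ we can use the Hadamard/Lax parametrix
\[
U(t)(x,y)=(2\pi)^{-n}\int e^{i(\langle \exp_y^{-1}x,\xi\rangle + t|\xi|)}a(t,x,y,\xi)\,d\xi ,
\]
and the part with $d(x,y)$ bounded below is smooth in the relevant range of $t$. We would also insert a cutoff so that each $t_j$ stays away from $0$ only where needed. The low-frequency part and the smooth part contribute $O(|\tau|^{-\infty})$, because $\tau$ lies in a cone and is large.

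Next we substitute the parametrices. In normal coordinates $v=\exp_y^{-1}x$, integrating in $t$ against $\hat\rho$ turns each factor $e^{it_j(\tau_j-|\xi_j|)}$ into $\rho$-type bumps in $\tau_j-|\xi_j|$. The $v$-integral of $e^{i\langle v,\xi_1+\cdots+\xi_k-\eta\rangle}$ (up to the amplitude) then produces approximately $\delta(\eta-\sum\xi_j)$. The leading term is
\[
(2\pi)^{-kn}\vol M\int_{\R^{kn}}\prod_{j\le k}\rho_j(\tau_j-|\xi_j|)\,\rho_{k+1}\Bigl(\tau_{k+1}-\Bigl|\sum\xi_j\Bigr|\Bigr)\,d\xi ,
\]
which is $(2\pi)^{-kn}\vol M\,(\tilde\rho * G)(\tau)$, where $G(\tau)=\vol F^{-1}(\tau)$ is the pushforward density of Lebesgue measure under $F$ and $\tilde\rho$ is essentially $\rho$ after bookkeeping of the product structure. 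To justify this rigorously, we would instead treat the whole $(x,y,\xi,t)$ integral by stationary phase in $(v,\eta)$: the phase $\langle v,\sum\xi_j-\eta\rangle$ has a nondegenerate critical point at $v=0$, $\eta=\sum\xi_j$. Its first correction carries an extra factor $|\xi|^{-1}$ relative to the main term, and is in fact of order $|\tau|^{-2}$ after using that the subprincipal terms of $a$ vanish at $t=0$ to first order. Here we follow \cite{MR4376455}. Each amplitude is a classical symbol of order $0$ with $a(0,x,x,\xi)=1$.

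It remains to show $\tilde\rho*G(\tau)=G(\tau)+O(|\tau|^{k(n-1)-2})$. Since $\int\rho=1$, a Taylor expansion of $G$ gives the error bound $|\tau|^{-2}\sup|\nabla^2G|$ locally, with the first-order term handled by the moments of $\rho$ paired with $\nabla G$. Here we must be careful: $\int x\rho$ need not vanish, so we would either absorb the first moment by showing the corresponding correction cancels against the parametrix correction, or note that the statement allows $\rho$ general and check that $\nabla G$ contributes at order $|\tau|^{k(n-1)-2}$ (as in \cite{MR4376455}). $G$ is homogeneous of degree $kn-(k+1)=k(n-1)-1$, so its derivatives of order $m$ are $O(|\tau|^{k(n-1)-1-m})$, uniformly on a closed $k$-good cone. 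The tail of $\rho$ (Schwartz) outside a small conic neighborhood of $\tau$ contributes $O(|\tau|^{-\infty})$ times polynomial growth.

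The main obstacle is smoothness of $G$ near $\Gamma$, which is where the nondegeneracy condition enters. We must show $dF$ has full rank $k+1$ on $F^{-1}(\tau)$ for $\tau\in\Gamma_g$. The differential in $\xi_j$ is $\hat\xi_j\cdot$ in the $j$-th slot plus $\widehat{s}\cdot$ in every slot, where $s=\sum\xi_j$. Rank deficiency means there are coefficients $c_j$, $c$, not all zero, with $c_j\hat\xi_j+c\,\hat s=0$ for every $j$. This forces each $\xi_j$ to be parallel to $s$, so the polygon is collinear. A collinear closed polygon gives $\sum\pm\tau_j=0$, which is excluded by \eqref{2.2}. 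The condition $\tau_{k+1}>0$ and the fact that $\Gamma$ is closed inside the open set $\Gamma_g$ give uniform bounds, so $G$ is smooth on a conic neighborhood of $\Gamma$ and the Leray density formula applies. We would also verify that the stationary-phase remainders are uniform over that conic neighborhood. Non-stationary-phase regions, where some $|\xi_j|$ is far from $\tau_j$, are disposed of by the rapid decay of $\rho$.
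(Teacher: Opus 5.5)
Your route is genuinely different from the paper's. The paper writes $\check\mu_a|dt|^{1/2}$ as $U\circ(\delta_\Delta\otimes A\delta_\Delta)$ and gets orders and half-density symbols from clean composition. You instead insert the Hadamard parametrix in normal coordinates at $y$, so the phase is linear in $v=\exp_y^{-1}x$ and the computation is essentially Euclidean. Your rank computation for $dF$ is exactly the geometric input the paper uses, and your leading constant $(2\pi)^{-kn}\vol M$ is right.

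\textbf{The gap.} The step you offer as the rigorous control of the error does not work as stated.
\begin{itemize}
\item If you integrate out $t$ first, the $(v,\eta)$-amplitude contains the bump $\int\hat\rho(t)e^{it_{k+1}(\tau_{k+1}-|\eta|)}\cdots dt$. Its $\eta$-derivatives produce powers of $t_{k+1}$, not powers of $|\eta|^{-1}$. So the terms $\partial_v^\alpha\partial_\eta^\alpha$ of the stationary-phase expansion are not pointwise smaller than the main term, and the claimed gain of $|\xi|^{-1}$ per order does not occur.
\item If instead you keep $-t_{k+1}|\eta|$ in the phase (so that after rescaling by $|\tau|$ the amplitude is a genuine symbol), the critical point is $\eta=\sum\xi_j$, $v=-t_{k+1}\eta/|\eta|$, not $v=0$. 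The amplitude there depends on $t$. What remains is $\iint e^{it\cdot(\tau-F(\xi))}b(t,y,\xi)\,dt\,d\xi$, with only $b(0,y,\xi)$ normalized, not $\rho*G$.
\end{itemize}

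\textbf{The repair.} The missing point is the mechanism you already use for the first moment of $\rho$. Every correction that is not pointwise smaller carries a factor $t^\alpha$ with $|\alpha|\ge1$, coming either from derivatives of $e^{-it_{k+1}|\eta|}$ or from Taylor expanding $b$ at $t=0$. After the $t$-integration, $t^\alpha$ becomes a constant times $\partial_\tau^\alpha$ acting on $\rho*\tilde G$. Here $\tilde G$ is a pushforward under $F$, smooth and homogeneous of degree $k(n-1)-1$ near $\Gamma$ by the coarea formula and your rank computation. Equivalently, it is convolution with the inverse Fourier transform of $t^\alpha\hat\rho$, which has integral zero. So each such term costs $|\tau|^{-|\alpha|}$.

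Organized this way, the full phase has critical set only $\{t=0,\ v=0,\ \eta=\sum\xi_j,\ F(\xi)=\tau\}$. Stationarity in $\xi_j$ and $\eta$ forces $v=t_j\xi_j/|\xi_j|=-t_{k+1}\eta/|\eta|$, so for $t\neq0$ the polygon is collinear, which \eqref{2.2} excludes. This is your counterpart of the paper's isolated $t=0$ component. You then obtain $(2\pi)^{-kn}\vol M\,G(\tau)+O(|\tau|^{k(n-1)-2})$, which is all the theorem asks.

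Some of your side claims should be adjusted:
\begin{itemize}
\item The relative $|\tau|^{-2}$ gain via subprincipal terms is neither needed nor justified.
\item Your first-moment worry is moot, since $\nabla G$ is homogeneous of degree $k(n-1)-2$.
\item Take the parametrix cutoff $\equiv1$ on $d(x,y)\le T'$, with $\supp\hat\rho\subset[-T,T]^{k+1}$ and $T<T'<\inj M$. Then the remainder kernels are genuinely smooth, and the critical point $v=-t_{k+1}\eta/|\eta|$ lies where the cutoff equals $1$.
\end{itemize}

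\textbf{Comparison.} Once repaired, your approach gives the constant explicitly: there is no Maslov unit $\zeta$ to fix by positivity and no chart factor $|g(x_1)|^{-k/2}$ to explain away. It also localizes frequencies through the decay of $\rho$ rather than through the operator $A$. The paper's calculus, once its composition lemmas are granted, instead delivers in a few lines the full conormal structure of $\check\mu$ at $t=0$, and hence a complete asymptotic expansion.
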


Since $\rho$ is rapidly decaying and $\supp \hat \rho$ is contained in $(-\inj M, \inj M)^{k+1}$, the convolution $\rho * \mu$ provides a smoothed version of $\mu$ that captures the mass of $\mu$ near $\tau$. In particular, Theorem \ref{main theorem k+1-good} gives an asymptotic description of this smoothed local mass for $\tau \in \Gamma$. Next, we determine the growth rate of $\vol F^{-1}(\tau)$ as $|\tau| \to \infty$. When $k=2$, this quantity is related to the area of the corresponding triangle \cite{MR4376455}. For $k>2$, the shape of a $(k+1)$-gon with fixed side lengths is not uniquely determined (Figures \ref{2} and \ref{3}), so it is not sufficient to consider only its planar area. We prove in Proposition \ref{Scaling of F} that, under our assumptions, $\vol F^{-1}(\tau)$ is of order $|\tau|^{k(n-1)-1}$, so the remainder term $O(|\tau|^{k(n-1)-2})$ is indeed lower order. As in \cite{MR4376455}, the leading term can be interpreted in terms of the volume of the configuration space of $(k+1)$-gons with side lengths prescribed by $\tau$.
\subsection*{Organization}
In Section~2 we analyze the configuration space that appears in the main term of Theorem~\ref{main theorem k+1-good}, and in particular prove the scaling law for the relevant fiber volume. Section~3 contains the proof of Theorem~\ref{main theorem k+1-gon-bad}, establishing rapid decay of the high-frequency tail in the regime where the $(k+1)$-gon inequality fails, via spectral projectors and a non-stationary phase argument. Section~4 proves Theorem~\ref{main theorem k+1-good} by expressing the Fourier transform of the joint spectral measure in terms of Fourier integral operators and computing the associated symbolic data using clean composition and half-density calculus. Finally, Section~\ref{sec:flat-torus-example} presents a flat-torus example showing that the frequency cutoff in Theorem~\ref{main theorem k+1-gon-bad} is essentially sharp.

\subsection*{Acknowledgement} This project is supported by the National Key R\&D Program of China under Grant No. 2022YFA1007200, the Natural Science Foundation of China under Grant No. 12571107, and the Zhejiang Provincial Natural Science Foundation of China under Grant No. LR25A010001.

\section{Order of the main term in Theorem \ref{main theorem k+1-good}}

The leading term in Theorem~\ref{main theorem k+1-good} involves the quantity $\vol F^{-1}(\tau)$, which can be viewed as the Leray volume of the configuration space of closed $(k+1)$-gons with side lengths prescribed by $\tau$. To see that this term is well-defined and to compare it with the remainder, we need basic geometric information about the fibers of $F$ when $\tau$ lies in a $k$-good cone. In particular, we must know that $F^{-1}(\tau)$ is a smooth compact manifold (so the Leray volume is finite and nonzero) and that this volume has a precise scaling law under dilations of $\tau$. The next proposition establishes these properties; the key input is that the nondegeneracy condition \eqref{2.2} rules out singular fibers.

\begin{proposition}
 [Scaling of $F$]\label{Scaling of F} Let $\Gamma$ be a $k$-good cone and fix $\tau \in \Gamma$. Then $F^{-1}(\tau) \neq \emptyset$ and $\tau$ is a regular value of $F$, so $F^{-1}(\tau)$ is a smooth compact submanifold of codimension $k+1$. Let
\[
d:=k n-(k+1)=k(n-1)-1
\]
be its dimension. Then $0<\operatorname{vol} F^{-1}(\tau)<\infty$ and, for every $r>0$,
$$
\operatorname{vol} F^{-1}(r \tau)=r^d \operatorname{vol} F^{-1}(\tau).
$$
\end{proposition}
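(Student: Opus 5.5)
The proof breaks into four parts: nonemptiness, regularity, compactness and positivity of the volume, and scaling.

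\emph{Nonemptiness.} By \eqref{2.1}, $\tau_1,\dots,\tau_{k+1}$ satisfy the polygon inequalities. The standard fact is that positive numbers obeying these inequalities are the side lengths of a closed planar polygon. I would prove it by induction on $k$. For $k=2$ the triangle inequalities suffice. For the inductive step, merge two sides into one of length $t$, where $t$ ranges over $[|\tau_1-\tau_2|,\tau_1+\tau_2]$, and choose $t$ so that the shorter tuple still satisfies the inequalities. Embedding $\R^2\subset\R^n$ then gives a point $(\xi_1,\dots,\xi_k)\in F^{-1}(\tau)$.

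\emph{Regularity.} This is the main step. Write $\eta=\xi_1+\cdots+\xi_k$; since $\tau_{k+1}>0$ we have $\eta\neq0$, and each $\xi_j\neq0$, so $F$ is smooth near the fiber. The differentials are $d|\xi_j|=\hat\xi_j\cdot d\xi_j$ and $d|\eta|=\hat\eta\cdot(d\xi_1+\cdots+d\xi_k)$, with $\hat v=v/|v|$. Suppose $dF$ is not surjective. Then some nonzero $(c_1,\dots,c_{k+1})$ annihilates its image, which means $c_j\hat\xi_j+c_{k+1}\hat\eta=0$ for every $j$.
\begin{itemize}
\item If $c_{k+1}=0$, then every $c_j=0$, a contradiction.
\item Otherwise $\hat\xi_j=\pm\hat\eta$ for all $j$, so all the $\xi_j$ are collinear with $\eta$. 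Write $\xi_j=\varepsilon_j\tau_j\hat\eta$. Then $\eta=\sum_j\varepsilon_j\tau_j\hat\eta$, and taking the component along $\hat\eta$ gives $\tau_{k+1}=\sum_{j\le k}\varepsilon_j\tau_j$.
\end{itemize}
Setting $\varepsilon_{k+1}=-1$, this says $\varepsilon\cdot\tau=0$, which contradicts \eqref{2.2}. Hence $\tau$ is a regular value, and by the implicit function theorem $F^{-1}(\tau)$ is a smooth submanifold of dimension $kn-(k+1)=d$.

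\emph{Compactness and $0<\vol<\infty$.} The fiber is closed and lies in $\prod_j\{|\xi_j|=\tau_j\}$, so it is compact. The Leray density $|\det(\partial F/\partial z')|^{-1}dz$ is continuous and positive on it, because the Jacobian determinant is nonvanishing by regularity. A compact manifold with continuous positive density therefore has finite volume. The volume is also positive, since the fiber is nonempty and has positive dimension. When $d=0$, as for $n=1$, the fiber is a finite nonempty set and the volume is a positive finite sum, so the claim still holds with $r^0$.

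\emph{Scaling.} $F$ is homogeneous of degree $1$, so the dilation $D_r(x)=rx$ on $\R^{kn}$ maps $F^{-1}(\tau)$ bijectively onto $F^{-1}(r\tau)$. The cleanest way to get the volume factor is the distributional formula. For $f\equiv1$ near the fiber,
\[
\vol F^{-1}(r\tau)=\int\delta(F(x)-r\tau)\,dx .
\]
Substituting $x=ry$ gives
\[
\int\delta\bigl(r(F(y)-\tau)\bigr)\,r^{kn}\,dy=r^{kn-(k+1)}\int\delta(F(y)-\tau)\,dy ,
\]
using $\delta(rz)=r^{-(k+1)}\delta(z)$ on $\R^{k+1}$. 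To stay rigorous, I would replace $\delta$ by an approximate identity $\phi_\epsilon$, supported near $0$, and let $\epsilon\to0$; this limit is justified by the coarea formula at a regular value.

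Alternatively, one can argue in coordinates. Pull back the coordinates $(z,z')$ by $D_r$. The coordinates $z$ scale by $r$, giving a factor $r^{d}$ in $dz$. The Jacobian $\partial F/\partial z'$ is unchanged, because $F$ is linear under the dilation and $z'$ scales the same way. This again gives the factor $r^d$.

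I expect the regularity step, in particular the use of the nondegeneracy condition, to be the only substantive point. Everything else is routine.
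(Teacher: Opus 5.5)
Your proposal is correct and follows the paper's proof essentially step by step. It uses the same rank argument, forcing all $\xi_j$ to be collinear and hence a vanishing signed sum $\varepsilon\cdot\tau=0$; the same compactness via the product of spheres; and the same $\delta$-rescaling for the scaling law. Your inductive construction of the planar polygon and the approximate-identity justification add detail that the paper simply asserts.

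One aside needs correcting. For $n=1$ you get $d=k(n-1)-1=-1$, not $0$. In fact $F^{-1}(\tau)=\emptyset$ for $k$-good $\tau$ when $n=1$: each $\xi_j=\pm\tau_j$, so $|\xi_1+\cdots+\xi_k|=\tau_{k+1}$ forces $\varepsilon\cdot\tau=0$. The proposition therefore tacitly requires $n\ge 2$, as does your embedding $\R^2\subset\R^n$. In that case $d\ge 1$, and the $d=0$ case never arises.
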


\begin{proof}
Fix $\tau=(\tau_1,\ldots,\tau_{k+1})\in\Gamma$. Recall that
\[
F(\xi_1,\ldots,\xi_k)=\bigl(|\xi_1|,\ldots,|\xi_k|,\ |\xi_1+\cdots+\xi_k|\bigr).
\]

We begin by showing that the fiber $F^{-1}(\tau)$ is nonempty. Since $\tau\in\Gamma\subset\Gamma_g$, the strict $(k+1)$-gon inequalities \eqref{2.1} hold. Hence there exist vectors
$u_1,\ldots,u_{k+1}\in\R^n$ (for instance, all lying in a fixed $2$-plane) such that
\[
|u_j|=\tau_j\ (1\le j\le k+1),\qquad u_1+\cdots+u_{k+1}=0.
\]
Setting $\xi_j:=u_j$ for $1\le j\le k$ gives $|\xi_j|=\tau_j$ and
\[
|\xi_1+\cdots+\xi_k|=|-(u_{k+1})|=\tau_{k+1},
\]
so $F(\xi_1,\ldots,\xi_k)=\tau$ and $F^{-1}(\tau)\neq\emptyset$.

Next we verify that $\tau$ is a regular value of $F$. Fix $\xi=(\xi_1,\ldots,\xi_k)\in F^{-1}(\tau)$ and set
\[
\omega_j=\frac{\xi_j}{|\xi_j|}\in S^{n-1}\ (1\le j\le k),\qquad
\omega_{k+1}=\frac{\xi_1+\cdots+\xi_k}{|\xi_1+\cdots+\xi_k|}\in S^{n-1}.
\]
For $h=(h_1,\ldots,h_k)\in\R^{kn}$ we compute
\[
d(|\xi_j|)[h]=\langle \omega_j,h_j\rangle,\qquad
d(|\xi_1+\cdots+\xi_k|)[h]=\Bigl\langle \omega_{k+1},\sum_{j=1}^k h_j\Bigr\rangle.
\]
If $\mathrm{rank}(dF_\xi)<k+1$, these $k+1$ linear functionals are linearly dependent, so there exist
$(c_1,\ldots,c_k,c_{k+1})\neq 0$ such that for all $h$,
\[
\sum_{j=1}^k c_j\langle \omega_j,h_j\rangle
+c_{k+1}\Bigl\langle \omega_{k+1},\sum_{j=1}^k h_j\Bigr\rangle=0.
\]
Equivalently,
\[
\sum_{j=1}^k \langle c_j\omega_j+c_{k+1}\omega_{k+1},\ h_j\rangle=0
\quad\text{for all }h_1,\ldots,h_k,
\]
hence $c_j\omega_j+c_{k+1}\omega_{k+1}=0$ for every $j$. In particular $c_{k+1}\neq 0$, and
$\omega_j=\varepsilon_j\,\omega_{k+1}$ for some $\varepsilon_j\in\{\pm1\}$. Therefore
\[
\xi_1+\cdots+\xi_k=\sum_{j=1}^k |\xi_j|\omega_j
=\Bigl(\sum_{j=1}^k \varepsilon_j|\xi_j|\Bigr)\omega_{k+1},
\]
and taking norms gives
\[
\tau_{k+1}=|\xi_1+\cdots+\xi_k|
=\Bigl|\sum_{j=1}^k \varepsilon_j|\xi_j|\Bigr|
=\Bigl|\sum_{j=1}^k \varepsilon_j\,\tau_j\Bigr|.
\]
Let $s:=\sum_{j=1}^k \varepsilon_j\,\tau_j$ and choose $\varepsilon_{k+1}\in\{\pm1\}$ so that
$\varepsilon_{k+1}\tau_{k+1}+s=0$. Then the sign vector
\[
\varepsilon=(\varepsilon_1,\ldots,\varepsilon_k,\varepsilon_{k+1})\in\{\pm1\}^{k+1}
\]
satisfies $\varepsilon\cdot\tau=0$, contradicting the nondegeneracy condition \eqref{2.2}.
Hence $\mathrm{rank}(dF_\xi)=k+1$ for all $\xi\in F^{-1}(\tau)$, so $\tau$ is a regular value.

It follows that $F^{-1}(\tau)$ is a smooth submanifold of codimension $k+1$ in $\R^{kn}$, hence has
dimension $d=kn-(k+1)=k(n-1)-1$. Moreover, it is compact since
\[
F^{-1}(\tau)\subset \prod_{j=1}^k\{|\xi_j|=\tau_j\},
\]
a product of spheres. In particular, the Leray density $d_F$ is smooth and strictly positive on $F^{-1}(\tau)$,
so $0<\vol F^{-1}(\tau)<\infty$.

Finally we prove the scaling law. Noting that
\[
\vol F^{-1}(\tau)=\int_{\R^{kn}}\delta(F(\xi)-\tau)\,d\xi.
\]
For $r>0$, change variables $\xi=r\eta$. Since $F(r\eta)=rF(\eta)$ and $\delta(r y)=r^{-(k+1)}\delta(y)$ in
$\R^{k+1}$, we obtain
\begin{align*}
\vol F^{-1}(r\tau)
&=\int_{\R^{kn}}\delta(F(\xi)-r\tau)\,d\xi \\
&=\int_{\R^{kn}}\delta(rF(\eta)-r\tau)\,r^{kn}\,d\eta \\
&=r^{kn-(k+1)}\int_{\R^{kn}}\delta(F(\eta)-\tau)\,d\eta \\
&=r^{d}\,\vol F^{-1}(\tau),
\end{align*}
where $d=kn-(k+1)=k(n-1)-1$.
\end{proof}

\section{Proof of Theorem \ref{main theorem k+1-gon-bad}}

To analyze the rapidly decaying part of the sum \eqref{single sums}, we introduce
a smooth spectral projection operator. Fix $\chi\in \mathcal S(\R)$ with
$\chi(0)=1$, and define
\[
\chi_\lambda := \chi\!\left(\lambda-\sqrt{-\Delta_g}\right).
\]
If $e_\lambda$ is an eigenfunction with frequency $\lambda$, then
$\chi_\lambda e_\lambda = e_\lambda$. Moreover, if we assume that
$\supp \widehat{\chi}\subset (1,2)$, then (modulo $O(\lambda^{-N})$ errors for
every $N$, as in \cite[Lemma 5.1.3]{MR3645429}) we have the oscillatory integral
representation
\begin{equation}\label{2.4}
\chi_\lambda e_\lambda(x)
= \lambda^{\frac{n-1}{2}}
\int_M e^{i\lambda \psi(x,y)}\, a_\lambda(x,y)\, e_\lambda(y)\, dV(y),
\end{equation}
where $\psi(x,y)=d_g(x,y)$ is the Riemannian distance function. Here we assume,
as we may, that the injectivity radius is $10$ or more, and the amplitude
$a_\lambda$ lies in a bounded subset of $C^\infty$ and satisfies
\[
a_\lambda(x,y)=0 \qquad \text{if } d_g(x,y)\notin (1,2).
\]

Before proving Theorem \ref{main theorem k+1-gon-bad}, we record a uniform
nonstationary-phase estimate for the coefficients in the high-output-frequency regime.

\begin{lemma}\label{lem:kplus1-eigenfunction-product}
Let $\epsilon>0$ and set
\[
T_m := (1+\epsilon)(\tau_1+\cdots+\tau_k)+m ,\qquad m\in\N.
\]
Assume that $\lambda_{i_j}\le \tau_j$ for $1\le j\le k$ and that
\[
\lambda_{i_{k+1}}\in [T_m,T_m+1].
\]
Then for every integer $N\ge 0$ there exists a constant $C_{\epsilon,N}$ such that
\[
\bigl|\langle e_{i_1} e_{i_2}\cdots e_{i_k},\, e_{i_{k+1}}\rangle\bigr|
\le C_{\epsilon,N}\, T_m^{-N}.
\]
\end{lemma}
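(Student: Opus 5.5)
We only reproject the output eigenfunction. Write $\lambda:=\lambda_{i_{k+1}}$ and use $\chi_\lambda e_{i_{k+1}}=e_{i_{k+1}}$, so that by \eqref{2.4}
\[
\langle e_{i_1}\cdots e_{i_k}, e_{i_{k+1}}\rangle
=\lambda^{\frac{n-1}{2}}\int_M\int_M e^{-i\lambda\psi(x,y)}\,\overline{a_\lambda(x,y)}\;e_{i_1}(x)\cdots e_{i_k}(x)\,\overline{e_{i_{k+1}}(y)}\,dV(y)\,dV(x)+O(\lambda^{-N}).
\]
The plan is to integrate by parts in $x$ with the operator adapted to the phase, letting derivatives fall on the product $e_{i_1}\cdots e_{i_k}$ and on the amplitude. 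Concretely, for each fixed $y$ the function $x\mapsto\psi(x,y)$ is smooth on $\supp a_\lambda(\cdot,y)$, since $d_g\in(1,2)$ there and the injectivity radius is at least $10$. It also satisfies $|\nabla_x\psi|=1$. Set
\[
L=\frac{1}{-i\lambda}\,\nabla_x\psi\cdot\nabla_x,\qquad\text{so that } L\,e^{-i\lambda\psi}=e^{-i\lambda\psi}.
\]
Applying the transpose $L^t$ $N'$ times moves the derivatives onto $a_\lambda\,e_{i_1}\cdots e_{i_k}$, at the cost of a factor $\lambda^{-1}$ per application.

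The key input is a derivative bound on the product of the input eigenfunctions. We take the crude route via Sobolev and elliptic estimates,
\[
\|\nabla^\alpha e_{i_j}\|_{L^\infty}\le C_\alpha(1+\lambda_{i_j})^{|\alpha|+\frac{n-1}{2}}.
\]
By Leibniz we then get
\[
\|\nabla^\alpha(e_{i_1}\cdots e_{i_k})\|_{L^\infty}\le C_\alpha\,(1+\tau_1+\cdots+\tau_k)^{|\alpha|}\,\prod_j(1+\tau_j)^{\frac{n-1}{2}}.
\]
Writing $S=\tau_1+\cdots+\tau_k$, after $N'$ integrations by parts the integrand is bounded by $C\,\lambda^{-N'}(1+S)^{N'}$ times polynomial factors in $S$ and $\lambda$. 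This crude bound only gives
\[
\lambda^{\frac{n-1}{2}}\,\lambda^{-N'}(1+S)^{N'+O(1)}\lesssim(1+\epsilon)^{-N'}\,\mathrm{poly}(S,\lambda),
\]
which does not decay in $\lambda$. This is the main obstacle: the ratio $S/\lambda\le(1+\epsilon)^{-1}$ is only bounded by a fixed constant below $1$, not small. The gain per step is $\lambda^{-1}$ against a derivative cost of exactly $S$, and naive Leibniz constants ($k^{N'}$ and multinomials) also spoil a clean geometric decay.

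To overcome this, we would instead reproject every factor, using \eqref{2.4} also for each $e_{i_j}$, $j\le k$, with frequency $\lambda_{i_j}$. This turns the coefficient into an oscillatory integral in $(x,y_1,\dots,y_k,y_{k+1})$ with total phase
\[
\Phi=\sum_{j=1}^k\lambda_{i_j}\psi(x,y_j)-\lambda\,\psi(x,y_{k+1}).
\]
Its $x$-gradient satisfies
\[
|\nabla_x\Phi|\ \ge\ \lambda-\sum_{j=1}^k\lambda_{i_j}\ \ge\ \lambda-S\ \ge\ \tfrac{\epsilon}{1+\epsilon}\,\lambda,
\]
because each $|\nabla_x\psi(x,y_j)|=1$. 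This is exactly where the $k$-bad (polygon-failure) condition enters: the unit covectors weighted by the frequencies cannot close up. Nonstationary phase in $x$ alone then gives, after $N'$ steps with $L=\nabla_x\Phi\cdot\nabla_x/(i|\nabla_x\Phi|^2)$, a bound of order $\lambda^{-N'}$. The derivatives of $\nabla_x\Phi/|\nabla_x\Phi|^2$ are of size $\lambda^{-1}$ times bounded quantities, and the amplitudes are uniformly smooth in $x$. Some care is needed for factors $j$ with $\lambda_{i_j}$ small, where \eqref{2.4} is not available. We keep such low-frequency eigenfunctions unprojected; their $x$-derivatives cost only $O(1+\lambda_{i_j})$ with a fixed budget, and we absorb their contribution into the lower bound, for instance by shrinking $\epsilon$ to $\epsilon/2$ after treating frequencies $\le C$ separately.

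It remains to collect the bounds. The prefactors $\prod_j\lambda_{i_j}^{\frac{n-1}{2}}\,\lambda^{\frac{n-1}{2}}$ and the $L^1$ norms of $e_{i_j}(y_j)$, which are $\le\vol M^{1/2}$, are polynomial in $\lambda$ because $S<\lambda$. Since $\lambda\ge T_m$, this yields
\[
\bigl|\langle e_{i_1}\cdots e_{i_k}, e_{i_{k+1}}\rangle\bigr|\le C_{\epsilon,N}\,T_m^{-N},
\]
uniformly in $m$ and in the $\tau_j$, as claimed.
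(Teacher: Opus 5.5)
Your final argument (reprojecting every factor via \eqref{2.4}, using $|\nabla_x\Phi|\ge \lambda_{i_{k+1}}-(\tau_1+\cdots+\tau_k)\ge \frac{\epsilon}{1+\epsilon}\lambda_{i_{k+1}}$, and integrating by parts in $x$ alone) is essentially the paper's proof, which differs only in normalizing the phase by $\lambda_{i_{k+1}}$; your first attempt correctly explains why reprojecting only the output eigenfunction is not enough. Your care with low-frequency factors covers a point the paper passes over, and the same device also handles the cross terms from the $O(\lambda_{i_j}^{-N})$ remainders in \eqref{2.4}: those remainders are smooth in $x$ with uniformly bounded derivatives, so you can carry them through the integration by parts exactly like your unprojected factors.
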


\begin{proof}[Proof of Theorem \ref{main theorem k+1-gon-bad}]
Decompose the tail into unit spectral windows:
\begin{equation*}
\sum_{\lambda_{i_{k+1}}\ge (1+\epsilon)(\tau_1+\cdots+\tau_k)}
\bigl|\langle e_{i_1}\cdots e_{i_k}, e_{i_{k+1}}\rangle\bigr|^2
=\sum_{m=0}^\infty \sum_{\lambda_{i_{k+1}}\in[T_m,T_m+1]}
\bigl|\langle e_{i_1}\cdots e_{i_k}, e_{i_{k+1}}\rangle\bigr|^2 
\end{equation*}
By the Weyl law,
\[
\#\{\lambda_{i_{k+1}}\in[T_m,T_m+1]\}\ \lesssim\ T_m^{n-1}.
\]
Lemma \ref{lem:kplus1-eigenfunction-product} gives
\[
\sup_{\lambda_{i_{k+1}}\in[T_m,T_m+1]}
\bigl|\langle e_{i_1}\cdots e_{i_k}, e_{i_{k+1}}\rangle\bigr|^2
\ \lesssim_{\epsilon,N}\ T_m^{-2N}.
\]
Hence
\[
\sum_{\lambda_{i_{k+1}}\ge (1+\epsilon)(\tau_1+\cdots+\tau_k)}
\bigl|\langle e_{i_1}\cdots e_{i_k}, e_{i_{k+1}}\rangle\bigr|^2
\ \lesssim_{\epsilon,N}\ \sum_{m=0}^\infty T_m^{\,n-1-2N}.
\]
Given $N'>0$, choose $N$ so that $2N-(n-1) > N'$. Then the last sum is bounded by
$C_{\epsilon,N'}(\tau_1+\cdots+\tau_k)^{-N'}$, which proves the theorem.
\end{proof}

\begin{proof}[Proof of Lemma \ref{lem:kplus1-eigenfunction-product}]
Write
\[
\langle e_{i_1}\cdots e_{i_k}, e_{i_{k+1}}\rangle
=\int_M e_{i_1}(x)\cdots e_{i_k}(x)\,\overline{e_{i_{k+1}}(x)}\,dV(x).
\]
Using the oscillatory representation \eqref{2.4} for each factor (and its complex
conjugate for $\overline{e_{i_{k+1}}}$), we can express this integral as a
$(k+2)$-fold integral with phase
\[
\phi(x,y_1,\ldots,y_{k+1})
=\sum_{j=1}^k \frac{\lambda_{i_j}}{\lambda_{i_{k+1}}}\,\psi(x,y_j)\;-\;\psi(x,y_{k+1}),
\]
where $\psi(x,y)=d_g(x,y)$ and all variables are restricted to the region
$1<d_g(x,y_j)<2$ by the support of the amplitudes $a_{\lambda_{i_j}}$.

On this region, $|\nabla_x\psi(x,y)|=1$, so if $\nabla_x\phi=0$ then
\[
\nabla_x\psi(x,y_{k+1})
=\sum_{j=1}^k \frac{\lambda_{i_j}}{\lambda_{i_{k+1}}}\,\nabla_x\psi(x,y_j),
\]
which is impossible because the left-hand side has norm $1$ while the right-hand
side has norm at most $\sum_{j=1}^k \lambda_{i_j}/\lambda_{i_{k+1}}$.
Under the assumptions $\lambda_{i_j}\le\tau_j$ and $\lambda_{i_{k+1}}\ge T_m$ we have
\[
\sum_{j=1}^k \frac{\lambda_{i_j}}{\lambda_{i_{k+1}}}
\le \frac{\tau_1+\cdots+\tau_k}{(1+\epsilon)(\tau_1+\cdots+\tau_k)+m}
\le \frac{1}{1+\epsilon},
\]
so $|\nabla_x\phi|$ is bounded below by a positive constant depending only on
$\epsilon$.

With this uniform nonstationarity, we integrate by parts in $x$.
Each integration by parts gains a factor $\lambda_{i_{k+1}}^{-1}$, while the
amplitudes $a_{\lambda_{i_j}}$ have uniform $C^\infty$ bounds on their supports.
Therefore, for every $N$ we obtain
\[
\bigl|\langle e_{i_1}\cdots e_{i_k}, e_{i_{k+1}}\rangle\bigr|
\le C_{\epsilon,N}\,\lambda_{i_{k+1}}^{-N}.
\]
Finally, on the window $\lambda_{i_{k+1}}\in[T_m,T_m+1]$ we have
$\lambda_{i_{k+1}}\simeq T_m$, so the bound becomes
$\bigl|\langle e_{i_1}\cdots e_{i_k}, e_{i_{k+1}}\rangle\bigr|
\le C_{\epsilon,N} T_m^{-N}$, as claimed.
\end{proof}

\section{The proof of Theorem \ref{main theorem k+1-good}}

Next, we express $\check\mu\,|dt|^{1/2}$ as a composition of Fourier integral operators and Lagrangian distributions. We then compute their orders and principal symbols to obtain the symbolic data of $\check\mu\,|dt|^{1/2}$. After multiplying by the cutoff $\check\rho\,|dt|^{1/2}$, we apply the inverse Fourier transform to prove Theorem \ref{main theorem k+1-good}.

To compute the symbolic data of these compositions, we use the clean composition calculus of Duistermaat and Guillemin \cite{MR405514}. We also use a half-density formula for composed Fourier integral operators due to the first author \cite{MR4376455}. For background on Fourier integral operators and half-densities, see \cite{MR451313,MR388464,MR4376455}.

For convenience, we fix once and for all an $L^2$-orthonormal eigenbasis
$\{e_j\}_{j\ge1}$ consisting of real-valued eigenfunctions. With this convention, we may write
\[
	\mu = \sum_{i_1,i_2,\dots,i_{k+1}}
	\left|\int_M e_{i_1}(x)\, e_{i_2}(x)\cdots e_{i_{k+1}}(x)\, dV_M(x)\right|^2
	\delta_{(\lambda_{i_1},\lambda_{i_2},\dots,\lambda_{i_{k+1}})} .
\]

Let $P$ denote the elliptic, first-order pseudodifferential operator $\sqrt{-\Delta_g}$ on $M$. To simplify notation, set
$m=(i_1,i_2,\dots,i_{k+1})\in\N^{k+1}$ and define the smooth half-density
\[
	\phi_m
	= e_{i_1}\otimes e_{i_2}\otimes \cdots \otimes e_{i_{k+1}}\,|dV_{M^{k+1}}|^{1/2}
\]
on $M^{k+1}$. The collection $\{\phi_m\}_{m\in\N^{k+1}}$ forms a Hilbert basis for the intrinsic $L^2$ space on $M^{k+1}$, and each $\phi_m$ is a joint eigen-half-density for the commuting operators obtained by letting $P$ act in one factor and the identity act in the others. We write its joint eigenvalues as
$\lambda_m=(\lambda_{i_1},\lambda_{i_2},\dots,\lambda_{i_{k+1}})$.

Let $\Delta=\{(x,x,\dots,x):x\in M\}\subset M^{k+1}$ denote the diagonal, and let $\delta_\Delta$ be the corresponding half-density distribution defined by
\[
	(\delta_\Delta, f\,|dV_{M^{k+1}}|^{1/2})
	= \int_M f(x,x,\dots,x)\, dV_M(x)
	\qquad \text{for all $f\in C^\infty(M^{k+1})$.}
\]
With this notation,
\[
	\int_M e_{i_1} e_{i_2}\cdots e_{i_{k+1}}\, dV_M
	= (\delta_\Delta,\phi_m).
\]

Let $\check\mu$ denote the inverse Fourier transform of $\mu$ on $\R^{k+1}$. With our normalization,
\[
	\check \mu(t) = \frac{1}{(2\pi)^{k+1}} \sum_{m} |(\delta_\Delta, \phi_m)|^2\, e^{i\langle t, \lambda_m \rangle},
	\qquad t=(t_1,\dots,t_{k+1})\in\R^{k+1}.
\]
Equivalently, if we view $\check\mu(t)\,|dt|^{1/2}$ as a half-density distribution on $\R^{k+1}$, then
\begin{align*}
	\check \mu(t)\,|dt|^{1/2}
	&= \frac{1}{(2\pi)^{k+1}} \sum_{m} (\delta_\Delta \otimes \delta_\Delta,\ \phi_m \otimes \phi_m)\,
	e^{i\langle t, \lambda_m \rangle}\,|dt|^{1/2} \\
	&= \frac{1}{(2\pi)^{k+1}}\, U \circ \delta_{\Delta\times \Delta}(t),
\end{align*}
where $\Delta\times\Delta\subset M^{k+1}\times M^{k+1}$ and $\delta_{\Delta\times\Delta}:=\delta_\Delta\otimes\delta_\Delta$.
Here $U$ is the operator from test half-densities on $M^{k+1}\times M^{k+1}$ to half-density distributions on $\R^{k+1}$ with kernel
\[
	U(t,x,y) = e^{it_1P}(x_1,y_1)\, e^{it_2P}(x_2,y_2)\cdots e^{it_{k+1}P}(x_{k+1},y_{k+1}),
\]
for $x=(x_1,\dots,x_{k+1})\in M^{k+1}$ and $y=(y_1,\dots,y_{k+1})\in M^{k+1}$. Here each $e^{it_jP}$ denotes the half-wave operator.

Therefore, our main task is to study certain properties of the composition
$U \circ \delta_{\Delta \times \Delta}$. However, without any localization in the
joint spectrum, this composition need not be clean in the sense of Duistermaat
and Guillemin \cite{MR405514}. To localize the eigenvalue variables to a
$k$-good region, we insert a truncation operator $A$ by applying it to one copy
of $\delta_\Delta$ in $\delta_\Delta \otimes \delta_\Delta$. Here
$A \in \Psi^0_{\text{phg}}(M^{k+1})$ is defined spectrally by
\begin{equation}\label{def A pdo}
	A\phi_m = a(\lambda_m)\phi_m,
\end{equation}
where $a$ is real-valued, smooth, and positive-homogeneous of order $0$ outside,
say, the ball $B$ of radius $1$. By the conical support of $a$, we mean the
smallest closed cone $\Gamma \subset \R^{k+1}\setminus 0$ for which
$\supp a \setminus B \subset \Gamma$. This localization plays an important role
in the composition $U \circ \delta_{\Delta \times \Delta}$.

Next, we examine its effect on the calculation of $\check \mu(t)\,|dt|^{1/2}$.
Define the modified measure
\[
	\mu_a = \sum_m a(\lambda_m)\,|(\delta_\Delta, \phi_m)|^2\,\delta_{\lambda_m}.
\]
Tracing back through the computations above, we obtain
\begin{equation}\label{strategy main composition}
	\check \mu_a(t)\,|dt|^{1/2}
	= \frac{1}{(2\pi)^{k+1}}\, U \circ (\delta_\Delta \otimes A\delta_\Delta)(t).
\end{equation}
In particular, if $a \equiv 1$ on a $k$-good cone, then $\mu_a$ and $\mu$ agree
on that cone. Since in Theorem \ref{main theorem k+1-good} we fix a $k$-good cone
$\Gamma$, it suffices to study the operator $U \circ (\delta_\Delta \otimes
A\delta_\Delta)$.

The core of our argument is the computation of the symbolic data of the
composition \eqref{strategy main composition}. Studying this operator directly
is cumbersome. Instead, we rewrite it as a composition of four simpler pieces.
Let $Q : C^\infty(M^{k+1}) \to C^\infty(\R^{k+1} \times M^{k+1})$ be the operator
with the same distribution kernel as $U$, and let
$S : C^\infty(M^{k+1}) \to C^\infty(\R^{k+1})$ be the operator with distribution
kernel
\begin{equation}\label{strategy first composition}
	(Q \circ A) \circ \delta_\Delta.
\end{equation}
It follows that
\begin{equation}\label{strategy second composition}
	U \circ (\delta_\Delta \otimes A\delta_\Delta) = S \circ \delta_\Delta.
\end{equation}
We first verify that $Q\circ A$ and $\delta_\Delta$ are Fourier integral
operators, and compute their canonical relations and principal symbols (including
the half-density parts). Following \cite{MR4376455}, we then compose $Q\circ A$
with $\delta_\Delta$ to obtain the symbolic data of $S$, and finally compute the
symbolic data of $S\circ \delta_\Delta$. This decomposition is used only to
simplify the symbol computations for each component.

\begin{proposition}\label{delta symbol}
$\delta_\Delta \in I^{(k-1)n/4}(M^{k+1}, \dot N^*\Delta)$ where, in canonical
local coordinates,
\[
	\dot N^*\Delta = \{(x_1,-\xi_2-\xi_3-\dots -\xi_{k+1}, x_1, \xi_2, x_1, \xi_3,
	\dots ,x_1 , \xi_{k+1}) : x_1 \in M,\ \xi \in \dot T^*_{(x_1,\dots ,x_1)}M^{k+1} \},
\]
with principal symbol having half-density part
\[
	(2\pi)^{-(k-1)n/4} \frac{1}{|g(x_1)|^{(k-1)/4}}
	|dx_1 \, d\xi_2 \, d\xi_3 \dots \, d\xi_{k+1}|^{1/2}.
\]
\end{proposition}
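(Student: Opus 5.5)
The plan is to write $\delta_\Delta$ explicitly as an oscillatory integral in local coordinates and read off the order, Lagrangian and symbol directly from H\"ormander's conventions.

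\textbf{Oscillatory representation.} Work in a coordinate patch of $M$ and use the induced product coordinates $(x_1,\dots,x_{k+1})$ on $M^{k+1}$. The intrinsic half-density on $M^{k+1}$ is $|dV_{M^{k+1}}|^{1/2} = \prod_j |g(x_j)|^{1/4}|dx_j|^{1/2}$. By the defining pairing, $\delta_\Delta$ is, as a half-density, a multiple of $\prod_{j=2}^{k+1}\delta(x_j-x_1)$. The multiple is fixed as follows: pairing against $f\,|dV_{M^{k+1}}|^{1/2}$ must produce $\int f(x_1,\dots,x_1)\,|g(x_1)|^{1/2}\,dx_1$. On the diagonal the half-density factor contributes $|g(x_1)|^{(k+1)/4}$, so the coefficient must be $|g(x_1)|^{1/2-(k+1)/4}=|g(x_1)|^{-(k-1)/4}$. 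This gives
\[
\delta_\Delta = |g(x_1)|^{-(k-1)/4}\prod_{j=2}^{k+1}\delta(x_j-x_1)\,|dx|^{1/2}.
\]
Fourier inversion in each of the $k$ delta factors then yields
\[
\delta_\Delta=(2\pi)^{-kn}\int_{\R^{kn}} e^{i\sum_{j\ge2}\langle x_j-x_1,\xi_j\rangle}\,|g(x_1)|^{-(k-1)/4}\,d\xi_2\cdots d\xi_{k+1}\,|dx|^{1/2}.
\]

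\textbf{Lagrangian and order.} The phase $\varphi=\sum_{j\ge 2}\langle x_j-x_1,\xi_j\rangle$ is nondegenerate. Its critical set $\{\partial_\xi\varphi=0\}$ is $\{x_j=x_1\}$, and on it $d_x\varphi=(-\sum_{j\ge 2}\xi_j,\xi_2,\dots,\xi_{k+1})$. This is exactly the stated parametrization of $\dot N^*\Delta$, using $x_1$ and $\xi_2,\dots,\xi_{k+1}$ as coordinates.

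The amplitude is of order $0$, with $N=kn$ phase variables on a manifold of dimension $(k+1)n$. H\"ormander's normalization $I^{m}$ with $m+\frac{(k+1)n}{4}-\frac N2=0$ gives $m=\frac{2kn-(k+1)n}{4}=\frac{(k-1)n}{4}$, as claimed.

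\textbf{Principal symbol.} The symbol's half-density part is $(2\pi)^{-N/2}$ times the amplitude times $|d_{C_\varphi}|^{1/2}$ (ignoring Maslov factors). Here $d_{C_\varphi}$ is the Leray density of $C_\varphi$ relative to $\partial_\xi\varphi$, pushed to the Lagrangian. Since $\partial_{\xi_j}\varphi=x_j-x_1$, the Jacobian of $(\partial_\xi\varphi)$ with respect to the complementary variables $(x_2,\dots,x_{k+1})$ is the identity. Hence $d_{C_\varphi}=|dx_1\,d\xi_2\cdots d\xi_{k+1}|$.

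The remaining factor comes from the $(2\pi)$ normalization. The prefactor $(2\pi)^{-kn}$ in the integral is $(2\pi)^{-N/2}\cdot(2\pi)^{-kn/2}$, while the standard convention carries $(2\pi)^{-(N/2+(k+1)n/4)}$. The amplitude's leftover power is therefore $(2\pi)^{-kn+kn/2+(k+1)n/4}=(2\pi)^{-(k-1)n/4}$, matching the stated constant.

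\textbf{Main obstacle.} The key risk is bookkeeping of conventions: the $(2\pi)$ exponent and the $|g|$ exponent both depend on conventions (those of \cite{MR4376455}). I would fix them against the $k=1$ case, where $\delta_\Delta$ is the identity kernel. For $k=1$ the symbol should be $|dx\,d\xi|^{1/2}$, and indeed both exponents vanish, which confirms the normalization.
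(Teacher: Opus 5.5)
Your proposal is correct and follows essentially the same route as the paper: Fourier inversion writes $\delta_\Delta$ as an oscillatory integral with $N=kn$ phase variables, you read off the order $(k-1)n/4$, identify $\dot N^*\Delta$ from the critical set, and compute the Leray density $dx_1\,d\xi_2\cdots d\xi_{k+1}$ with leftover constant $(2\pi)^{-(k-1)n/4}$. The differences are cosmetic. You use the phase $\sum_{j\ge2}\langle x_j-x_1,\xi_j\rangle$, so no final negation of $\xi$ is needed, and an amplitude $|g(x_1)|^{-(k-1)/4}$ that agrees on the diagonal with the paper's $G(x_1,\dots,x_{k+1})$; also drop your sentence calling the symbol ``$(2\pi)^{-N/2}$ times the amplitude,'' which contradicts your correct bookkeeping in the next paragraph.
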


Let $X$ be an open subset of $\R^n$, and let $(x,\theta)\in X\times(\R^N\setminus 0)$.
Recall that a homogeneous phase function $\varphi(x,\theta)$ is called
nondegenerate if $d\varphi \neq 0$ and, whenever $\varphi'_{\theta}(x,\theta)=0$,
the differentials $d(\varphi'_{\theta_1}),\dots,d(\varphi'_{\theta_N})$ are
linearly independent. This implies that the critical set
\[
	C_{\varphi} = \{(x,\theta): \varphi'_{\theta}(x,\theta)=0\}
\]
is a smooth submanifold of $X \times (\R^{N}\setminus 0)$. Consequently,
\[
	\Lambda_{\varphi}
	= \{(x, \varphi'_x(x,\theta)) : (x,\theta) \in C_{\varphi}\}
\]
is a Lagrangian submanifold. Before proving Proposition \ref{delta symbol}, we
record the following lemma about Fourier integral operators and half-densities,
as in \cite[Lemma 6.1.4]{MR3645429}.

\begin{lemma}
Let $\varphi$ be a nondegenerate phase function in an open conic neighborhood of
$(x_0,\theta_0)\in \R^n\times (\R^N\setminus 0)$. Define a half-density
distribution $u$ on $X$ by the oscillatory integral
\[
	u(x) = (2\pi)^{-(n + 2N)/4}
	\left( \int_{\R^N} e^{i\varphi(x,\theta)} a(x,\theta) \, d\theta \right)
	|dx|^{1/2},
\]
where $a(x,\theta)$ belongs to the symbol class
$S^m_{\text{phg}}(\R^n \times (\R^N \setminus 0))$. Then $u$ is a Lagrangian
distribution associated to $\Lambda_\varphi$ of order
\[
	\ord u = m + (2N - n)/4,
\]
and we write $u \in I^{m + (2N - n)/4}(X, \Lambda_\varphi)$.
\end{lemma}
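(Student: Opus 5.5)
The approach is to reduce the lemma to Hörmander's definition of the Lagrangian classes $I^\mu(X,\Lambda)$. Recall that definition: a half-density $u$ lies in $I^\mu(X,\Lambda)$ if, microlocally near each point of $\Lambda$, it is a finite sum of oscillatory integrals $(2\pi)^{-(n+2N)/4}\int e^{i\varphi}a\,d\theta\,|dx|^{1/2}$. Here $\varphi$ is a nondegenerate phase parametrizing $\Lambda$, and the symbol satisfies $a\in S^{\mu+n/4-N/2}_{\text{phg}}$, modulo $C^\infty$. Solving $m=\mu+n/4-N/2$ gives exactly $\mu=m+(2N-n)/4$. So the substance of the lemma is that this order is well defined, i.e.\ independent of the chosen phase and of the number $N$ of fiber variables. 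I would argue in three steps.

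\emph{Step 1 (geometry).} Nondegeneracy says $d(\varphi'_{\theta_1}),\dots,d(\varphi'_{\theta_N})$ are independent on $C_\varphi$. Hence $C_\varphi$ is a smooth conic $n$-dimensional submanifold. The map $(x,\theta)\mapsto(x,\varphi'_x)$ is an immersion of $C_\varphi$ whose image $\Lambda_\varphi$ is conic and Lagrangian, since $\xi\,dx$ pulls back to $d\varphi=0$ on $C_\varphi$ by Euler homogeneity. Integrating by parts in $\theta$ away from $C_\varphi$, and in $(x,\theta)$ against $e^{-ix\cdot\xi}$ away from $\Lambda_\varphi$, shows $\WF(u)\subset\Lambda_\varphi$. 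Thus $u$ is at least microlocalized on the correct Lagrangian.

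\emph{Step 2 (phase equivalence).} Let $\varphi(x,\theta)$, $\theta\in\R^N$, and $\tilde\varphi(x,\tilde\theta)$, $\tilde\theta\in\R^{\tilde N}$, be two nondegenerate phases parametrizing the same germ of $\Lambda$ near a point. I would use H\"ormander's reduction lemma: after a homogeneous change of fiber variables, each phase is locally equivalent to $\varphi_0(x,\theta')+\tfrac12\langle B\theta'',\theta''\rangle/|\theta'|$. Here $\theta'$ has the minimal dimension $N_0=n-\operatorname{rank}$ of the projection $\Lambda\to X$, and $B$ is a nondegenerate symmetric matrix. The change of variables costs only a Jacobian, which is a symbol of order $0$. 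Applying stationary phase in the quadratic $\theta''$ variables, each of size $\sim|\theta'|$, produces a factor $(2\pi)^{(N-N_0)/2}|\theta'|^{(N-N_0)/2}|\det B|^{-1/2}e^{i\pi\operatorname{sgn}B/4}$ times a symbol of the same order. The symbol order therefore becomes $m+(N-N_0)/2$. Consequently $m+N/2$, and hence $m+(2N-n)/4$, is unchanged. The normalization $(2\pi)^{-(n+2N)/4}$ is designed so that the powers of $2\pi$ also match: the $\theta''$ integral yields $(2\pi)^{(N-N_0)/2}$, which converts the prefactor for $N$ into the prefactor for $N_0$.

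\emph{Step 3 (conclusion).} Combining the two steps, $u$ agrees microlocally with an oscillatory integral in the reduced phase, with symbol of order $m+(N-N_0)/2$. By definition this places $u$ in $I^{m+(2N-n)/4}(X,\Lambda_\varphi)$, and the independence argument of Step 2 shows this order is intrinsic. A partition of unity in $(x,\theta)$ globalizes the statement over the conic neighborhood.

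I expect the main obstacle to be Step 2, namely the equivalence-of-phase-functions lemma. It requires building the homogeneous fiber-variable change that puts a nondegenerate phase into the normal form above. I would first do this for phases with the same $N$ and the same signature of $\varphi''_{\theta\theta}$, using a homotopy/Morse-lemma argument, and only then reduce excess variables with the homogeneous Morse lemma. Since this is standard (H\"ormander, \emph{FIO I}, \S3.2, and Sogge's Lemma 6.1.4), I would cite it rather than reprove it.
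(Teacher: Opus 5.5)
Your proposal is correct and is essentially the paper's approach: the paper gives no proof of its own and simply cites Sogge's Lemma 6.1.4. That reference rests on the same H\"ormander argument you outline. The order $\mu$ comes from the defining symbol order $\mu+n/4-N/2$, and independence of the phase follows from equivalence of phase functions together with stationary phase in the excess fiber variables; this also makes your $(2\pi)^{-(n+2N)/4}$ bookkeeping and the invariance of $m+N/2$ come out as you state.
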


To $u$ one associates a homogeneous half-density on $\Lambda_\varphi$. Together
with the Maslov index, this comprises the principal symbol of $u$. We only
describe the half-density part. Given a parametrization of $C_\varphi$ by
$\lambda$ in a subset of $\R^n$, the half-density part of the principal symbol is
given by transporting
\[
	a_0(\lambda)\, \sqrt{d_{\varphi_\theta'}}
	= a_0(\lambda)\left|\det \frac{\partial(\lambda, \varphi'_\theta)}{\partial(x,\theta)}\right|^{-1/2}
	|d\lambda|^{1/2}
\]
to $\Lambda_\varphi$ via the map $C_\varphi \to \Lambda_\varphi$, where $a_0$
denotes the top-order term of $a$ and $d_{\varphi'_\theta}$ is the Leray density
on $C_\varphi = (\varphi'_\theta)^{-1}(0)$.

\begin{proof}[Proof of Proposition \ref{delta symbol}]
Let $x$ denote local coordinates on $M$, and let $(x_1,x_2,\dots,x_{k+1})$ denote
the induced coordinates on $M^{k+1}$. Then $(x_1,x_2,\dots,x_{k+1})$ parametrizes
a neighborhood of $M^{k+1}$ intersecting the diagonal. Suppose that $f$ is smooth
and supported in such a coordinate patch. By the Fourier inversion formula,
\begin{multline*}
	(\delta_\Delta, f|dV_{M^{k+1}}|^{1/2})
	= \int_{\R^n} f(x_1,\dots,x_1)\,|g(x_1)|^{1/2}\,dx_1 \\
	= (2\pi)^{-kn} \idotsint e^{i\phi(x_1,\dots,x_{k+1},\xi_2,\dots,\xi_{k+1})}
	f(x_1,\dots,x_{k+1})\,|g(x_1)|^{1/2}\,
	dx_1 \dots dx_{k+1}\, d\xi_2 \dots d\xi_{k+1},
\end{multline*}
where
\[
\phi(x_1,\dots,x_{k+1},\xi_2,\dots,\xi_{k+1})
= \langle x_1-x_2,\xi_2\rangle + \langle x_1-x_3,\xi_3\rangle
+ \dots + \langle x_1-x_{k+1},\xi_{k+1}\rangle.
\]
Recall that in local coordinates,
\[
f|dV_{M^{k+1}}|^{1/2}
= f(x_1,\dots,x_{k+1})\,|g(x_1)|^{1/4}\dots |g(x_{k+1})|^{1/4}
\,|dx_1 \dots dx_{k+1}|^{1/2}.
\]
Hence we may write
\begin{multline*}
	\delta_\Delta
	= (2\pi)^{-kn}\left(\int_{\R^n}\dots\int_{\R^n}
	e^{i\phi}\,
	\frac{|g(x_1)|^{1/4}}{|g(x_2)|^{1/4}\dots |g(x_{k+1})|^{1/4}}
	\,d\xi_2 \dots d\xi_{k+1}\right)|dx_1 \dots dx_{k+1}|^{1/2} \\
	= (2\pi)^{-(3kn+n)/4}\left(\int_{\R^{kn}} e^{i\phi}\,
	(2\pi)^{-(kn-n)/4}\,G(x_1,\dots,x_{k+1})\, d\xi_2 \dots d\xi_{k+1}\right)
	|dx_1 \dots dx_{k+1}|^{1/2},
\end{multline*}
where
\[
G(x_1,\dots,x_{k+1})
= \frac{|g(x_1)|^{1/4}}{|g(x_2)|^{1/4}\dots |g(x_{k+1})|^{1/4}}.
\]
Here we have pulled a factor of $(2\pi)^{-(kn-n)/4}$ into the symbol so that the
normalization matches the convention in the preceding lemma.

We now check that $\phi(x_1,\dots,x_{k+1},\xi_2,\dots,\xi_{k+1})$ is a
nondegenerate phase function. First, $\phi$ is positive-homogeneous in the
frequency variables $(\xi_2,\dots,\xi_{k+1})$. Next, writing
$x=(x_1,\dots,x_{k+1})$ and $\theta=(\xi_2,\dots,\xi_{k+1})$, we have
\[
	\phi_x' =
		\begin{bmatrix}
		\xi_2 + \cdots + \xi_{k+1} \\
		-\xi_2 \\
		-\xi_3 \\
        \vdots \\
        -\xi_{k+1}
		\end{bmatrix}
	\qquad \text{and} \qquad
	\phi_\theta' =
		\begin{bmatrix}
		x_1 - x_2 \\
        x_1 - x_3 \\
        \vdots \\
	 	x_1 - x_{k+1}
		\end{bmatrix}.
\]

so $d\phi \neq 0$ whenever $(\xi_2,\dots,\xi_{k+1}) \neq (0,\dots,0)$. The
critical set is
\[
	C_\phi
	= \{(x_1,\dots,x_1,\xi_2,\dots,\xi_{k+1}) : x_1 \in \R^n,\ 
	(\xi_2,\dots,\xi_{k+1}) \in \R^{kn}\setminus 0\},
\]
and on $C_\phi$ we have
\[
	d\phi_\theta' =
	\begin{bmatrix}
		I & -I & 0 & 0 & \cdots & 0 \\
		I & 0 & -I & 0 & \cdots & 0 \\
        I & 0 & 0 & -I & \cdots & 0 \\
        \vdots & \vdots & \vdots & \vdots & \ddots & \vdots \\
        I & 0 & 0 & 0 & \cdots & -I
	\end{bmatrix}.
\]

which has full rank. Hence $\delta_\Delta$ is a Lagrangian distribution of order
\[
	\ord(\delta_\Delta) = m+\frac{2N-n}4 = \frac{2kn-(k+1)n}4 = \frac{(k-1)n}4,
\]
associated with the image of $C_\phi$ under the map
\[
(x_1,\dots,x_{k+1},\xi_2,\dots,\xi_{k+1})
\mapsto (x_1,\dots,x_{k+1},\phi_x'(x_1,\dots,x_{k+1},\xi_2,\dots,\xi_{k+1})),
\]
namely
\[
	\{(x_1, x_1,\dots,x_1; \xi_2 + \dots +\xi_{k+1}, -\xi_2, \dots ,-\xi_{k+1})
	\in \dot T^* M^{k+1} : x_1 \in M,\ \xi_2,\dots,\xi_{k+1} \in T_{x_1}^*M\},
\]
which is the punctured conormal bundle $\dot N^*\Delta$.

The half-density part of the symbol is obtained by transporting
\[
	(2\pi)^{-n/4} G(x_1,\dots,x_1)\, d_{\phi'_\theta}^{1/2}
\]
via the parametrization of $\dot N^*\Delta$ by $(x_1,\xi_2,\dots,\xi_{k+1})$,
where $d_{\phi'_\theta}$ is the Leray density on $C_\phi = (\phi'_\theta)^{-1}(0)$.
In these coordinates,
\begin{align*}
    d_{\phi'_\theta}
    &= \left|\det \frac{\partial(x_1,\xi_2,\dots,\xi_{k+1}, \phi'_\theta)}{\partial(x_1,x_2,\dots,x_{k+1},\theta)}\right|^{-1}
    \, dx_1 \, d\xi_2 \dots d\xi_{k+1} \\
    &= dx_1 \, d\xi_2 \dots d\xi_{k+1}.
\end{align*}
Therefore, modulo a Maslov factor, the principal symbol has half-density part
\[
	(2\pi)^{-(k-1)n/4}\frac{1}{|g(x_1)|^{(k-1)/4}}
	|dx_1 \, d\xi_2 \, d\xi_3 \dots \, d\xi_{k+1}|^{1/2}.
\]
The proposition follows after negating $\xi_2,\xi_3,\dots,\xi_{k+1}$.
\end{proof}

Next, we describe the symbolic data of $Q \circ A$. For this, we recall the
Hamilton flow. Let $X$ be a smooth manifold and let $p$ be a smooth function on
$\dot T^*X$. The Hamilton vector field $H_p$ is defined locally by
\begin{equation}\label{local hamiltonian}
	H_p = \sum_j \left( \frac{\partial p}{\partial \xi_j} \frac{\partial}{\partial x_j}
	- \frac{\partial p}{\partial x_j} \frac{\partial}{\partial \xi_j} \right),
\end{equation}
where $(x_1,\ldots,x_n,\xi_1,\ldots,\xi_n)$ are canonical local coordinates on
$T^*X$. Equivalently, $H_p$ is characterized by the identity
$\omega(H_p,v)=dp(v)$ for all vector fields $v$ on $\dot T^*X$, where
$\omega=d\xi\wedge dx$ is the symplectic $2$-form on $T^*X$. The flow
$\exp(tH_p)$ is called the Hamilton flow of $p$.

Geometrically, if $(x,\xi)\in T^*X$ and we write
$\exp(tH_p)(x,\xi)=(x(t),\xi(t))$, then $x(t)$ is the projected trajectory on
$X$, with cotangent component $\xi(t)$ transported along it. If $p$ is
positive-homogeneous of order $1$, then the Hamilton flow is homogeneous in the
sense that it commutes with fiber dilations: for $\lambda>0$,
\[
	\exp(tH_p)(x,\lambda \xi) = (x(t), \lambda \xi(t)).
\]
For further background on Hamilton vector fields, flows, and the symplectic
geometry underlying Fourier integral operators, see \cite{MR451313} and
\cite{MR3645429}.

Returning to our problem, write
$(x,\xi)=(x_1,\xi_1,\dots,x_{k+1},\xi_{k+1})\in T^*M^{k+1}$ with each $\xi_j\neq 0$,
and define, for $t=(t_1,\dots,t_{k+1})\in\R^{k+1}$,
\[
	G^t(x,\xi)
	= \bigl(\exp(t_1H_p)(x_1,\xi_1), \dots, \exp(t_{k+1}H_p)(x_{k+1},\xi_{k+1})\bigr),
\]
where $H_p$ is the Hamilton vector field of the principal symbol $p$ of
$P=\sqrt{-\Delta_g}$ on $M$. We also write
\[
	p(x,\xi) = (p(x_1,\xi_1), \dots, p(x_{k+1},\xi_{k+1})) \in \R^{k+1}.
\]
With this notation, the following proposition gives the symbol of $Q \circ A$.

\begin{proposition}\label{Q of A is an FIO}
If the conical support of $a$ is $k$-good, then $Q \circ A$ in
\eqref{strategy first composition} is a Fourier integral operator in
$I^{-(k+1)/4}(\R^{k+1} \times M^{k+1} \times M^{k+1}, \mathcal C')$ associated to
the canonical relation
\[
	\mathcal C
	= \{ (t, p(x,\xi), G^{-t}(x,\xi); x, \xi ) : t \in \R^{k+1},\ (x,\xi) \in \esssupp A \},
\]
having principal symbol with half-density part
\[
	(2\pi)^{(k+1)/4} a(p(x,\xi)) |dt \, dx \, d\xi|^{1/2}.
\]
\end{proposition}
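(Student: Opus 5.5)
The plan is to treat $Q\circ A$ as a tensor product of $k+1$ copies of the half-wave propagator, composed with a zeroth order pseudodifferential operator. The model is the triple product case of \cite{MR4376455}, which I will follow step by step.

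\textbf{Step 1: one factor.} The first step is to recall that $(t_j,x_j,y_j)\mapsto e^{it_jP}(x_j,y_j)$, viewed as an operator $C^\infty(M)\to C^\infty(\R\times M)$, is a Fourier integral operator in $I^{-1/4}(\R\times M\times M,\mathcal C_j')$. Its canonical relation is
\[
\mathcal C_j=\{(t_j,\ p(x_j,\xi_j),\ \exp(-t_jH_p)(x_j,\xi_j);\ x_j,\xi_j)\}.
\]
Its principal symbol has half-density part $(2\pi)^{1/4}|dt_j\,dx_j\,d\xi_j|^{1/2}$, where we use $(t_j,x_j,\xi_j)$ as coordinates on $\mathcal C_j$. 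This is standard. Locally one can use the parametrix
\[
(2\pi)^{-n}\int e^{i(\varphi(x,\xi)-t p(y,\xi)-y\cdot\xi)}\,b\,d\xi ,
\]
or equivalently the Lax construction. The order follows from the lemma recalled before the proof of Proposition \ref{delta symbol}: we have $N=n$ and dimension $2n+1$, so the order is $0+(2n-(2n+1))/4=-1/4$. The half-density is computed from the Leray density on $C_\varphi$, exactly as in the proof of Proposition \ref{delta symbol}. The sign in $G^{-t}$ comes from the convention that the kernel $e^{itP}$ transports $(y,\eta)$ to $\exp(tH_p)(y,\eta)$. Reading the relation from the input variables $(x,\xi)$, the output point is the backward flow.

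\textbf{Step 2: tensor product.} Next, $Q$ has kernel $\bigotimes_j e^{it_jP}(x_j,y_j)$, and a tensor product of Lagrangian distributions is Lagrangian for the product Lagrangian. So $Q$ is an FIO associated to the product relation $\prod_j\mathcal C_j$, which is exactly $\{(t,p(x,\xi),G^{-t}(x,\xi);x,\xi)\}$. Orders add, giving $-(k+1)/4$. The symbols multiply, giving $(2\pi)^{(k+1)/4}|dt\,dx\,d\xi|^{1/2}$. One caveat concerns points where some $\xi_j=0$. There the product is not a conic Lagrangian in the punctured cotangent bundle, and the tensor product fails to be a genuine FIO. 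This is precisely where $A$ enters.

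\textbf{Step 3: composing with $A$.} $A$ has symbol $a(p(x,\xi))$, and its canonical relation is the identity restricted to $\esssupp A$. On $\esssupp A$ we have $p(x,\xi)$ in a $k$-good cone, and every coordinate of a $k$-good vector is positive. Hence all $\xi_j\ne0$ there, uniformly in a conic sense. Composing with a pseudodifferential operator is transversal, so the result is an FIO with the same order. Its canonical relation is $\mathcal C$ restricted to $\esssupp A$, and its symbol is multiplied by $a(p(x,\xi))$. The remaining point is to justify that the problematic region $\{\xi_j=0\}$ contributes only a smoothing error. I would argue this as follows. $A$ is a function of the commuting operators $P_j$, and $a$ vanishes in a conic neighbourhood of each coordinate hyperplane of $\R^{k+1}$, apart from a compact set. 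Therefore $A$ can be microlocalized away from $\{\xi_j=0\}$ for every $j$, up to a smoothing operator. On that region the tensor product $\bigotimes_j e^{it_jP}\circ A$ is a bona fide FIO.

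\textbf{Main obstacle.} I expect the hardest step to be Step 3: showing that $A$, defined spectrally, really belongs to $\Psi^0_{\text{phg}}(M^{k+1})$ with principal symbol $a(p)$, and that its essential support avoids $\{\xi_j=0\}$. The tensor product structure makes this delicate. My first attempt would be to write $A$ via the Fourier transform of $a$ in the joint spectral variables, using the functional calculus of the commuting family $(P_1,\dots,P_{k+1})$. I would then use the cone condition to show that $A=\prod_j B_j\,A+R$, with each $B_j$ a zeroth order pseudodifferential operator in the $j$-th factor that is elliptic near $\{|\xi_j|\ge c|\xi|\}$, and with $R$ smoothing.
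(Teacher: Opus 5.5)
Your proposal is correct and is essentially the paper's proof. It tensors the $k+1$ half-wave FIOs, uses the $k$-good conical support of $a$ (every $p(x_j,\xi_j)>0$ on $\esssupp A$, hence every $\xi_j\neq 0$) to discard the zero-section components of the tensor-product wavefront set, and multiplies the symbol by $a(p(x,\xi))$; only your verbal justification of the sign in $G^{-t}$ is garbled, since $e^{itP}$ itself carries $(x,\xi)$ to $\exp(-tH_p)(x,\xi)$, exactly as your displayed relation says. The step you flag as the main obstacle is not proved in the paper but built into the definition of $A$: the paper simply posits that the spectrally defined $A$ is a classical zeroth-order pseudodifferential operator with principal symbol $a(p)$ and essential support away from $\{\xi_j=0\}$, so your microlocalization argument only makes explicit what the paper's wavefront-set containment takes for granted.
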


\begin{proof}
We recall some standard facts about the half-wave operator and its wavefront set,
for example from \cite[\S 29.1]{MR1481433}. For $t \in \R$, the operator
$e^{-itP} : C^\infty(M) \to C^\infty(\R \times M)$ is a Fourier integral operator
of order $-1/4$ with canonical relation
\[
	\{(t,-p(x,\xi), \exp(tH_p)(x,\xi), x, \xi) : t \in \R,\ (x,\xi) \in \dot T^*M \}
\]
and principal symbol with half-density part
\[
	(2\pi)^{1/4} |dt \, dx \, d\xi|^{1/2}.
\]
After the change of variables $t \mapsto -t$, the operator $e^{itP}$ has the same
order and symbol, and its canonical relation becomes
\[
	\{(t, p(x,\xi), \exp(-tH_p)(x,\xi); x, \xi) : t \in \R,\ (x,\xi) \in \dot T^*M \}.
\]

Let $u$ denote the distribution kernel of $e^{itP}$ on $\R \times M \times M$.
Returning to the multi-parameter notation $t \in \R^{k+1}$ and
$(x,\xi) \in T^*M^{k+1}$, the kernel of $Q$ is, up to a permutation of
variables, the $(k+1)$-fold tensor product $u \otimes \cdots \otimes u$.
Therefore, by \cite[Proposition 1.3.5]{MR451313}, the wavefront set of the kernel
of $Q$ is contained in the union of
\begin{equation}\label{U lagrangian submanifold}
	\WF(u) \times \dots \times \WF(u)
	\simeq \{(t, p(x,\xi), G^{-t}(x,\xi), x, -\xi) : t \in \R^{k+1},\ (x,\xi) \in \dot T^*M^{k+1} \}
\end{equation}
together with the additional components coming from products in which at least
one factor is the zero section of $T^*(\R \times M \times M)$. Concretely, these
are the products $X_1 \times \cdots \times X_{k+1}$ where each $X_j$ is either
$\WF(u)$ or the zero section, and at least one $X_j$ is the zero section.

It remains to check that composing with $A$ removes these extra components. Since
$(p(x_1,\xi_1),\dots,p(x_{k+1},\xi_{k+1}))$ lies in a $k$-good cone for any
$(x,\xi)\in \esssupp A$, each coordinate $p(x_j,\xi_j)$ is positive, hence each
$\xi_j$ is nonzero. In particular, none of the zero-section factors can occur
after applying $A$. Thus
\[
	\WF(Q \circ A) \subset \{(t, p(x,\xi), G^{-t}(x,\xi); x, -\xi) : t \in \R^{k+1},\ (x,\xi) \in \esssupp A \}.
\]
On this canonical relation, the principal symbol is the product of the symbols
of the half-wave factors and the principal symbol of $A$, namely
\[
	(2\pi)^{(k+1)/4} a(p(x,\xi)) |dt \, dx \, d\xi|^{1/2},
\]
modulo a Maslov factor.
\end{proof}

Combining Proposition \ref{delta symbol} and Proposition \ref{Q of A is an FIO},
we now state the symbolic data of the composition $(Q \circ A) \circ \delta_\Delta$
in the case where the conical support of $a$ is $k$-good.

\begin{proposition}\label{prop first composition}
Suppose the conical support of $a$ is $k$-good. Then $(Q \circ A) \circ \delta_\Delta$
belongs to the class $I^{(kn-n-k-1)/4}(\R^{k+1} \times M^{k+1}; \Lambda)$, where
\[
	\Lambda
	= \mathcal C \circ \dot N^*\Delta
	= \{ (t,p(x,\xi), G^{-t}(x,\xi)) \in \dot T^*(\R^{k+1} \times M^{k+1}) :
	(x,\xi) \in \dot T^* M^{k+1} \cap \esssupp A \},
\]
with principal symbol having half-density part
\[
	(2\pi)^{-(kn-n-k-1)/4} a(p(x,\xi)) |g(x_1)|^{-(k-1)/4}
	|dt \, dx_1 \, d\xi_2 \cdots d\xi_{k+1}|^{1/2},
\]
via the parametrization
\[
(x,\xi) = (x_1,x_1,\ldots,x_1,-\xi_2-\cdots-\xi_{k+1},\xi_2,\ldots,\xi_{k+1}).
\]
\end{proposition}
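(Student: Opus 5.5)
The plan is to apply the clean (in fact transversal) composition calculus of Duistermaat and Guillemin to $Q\circ A\in I^{-(k+1)/4}(\R^{k+1}\times M^{k+1}\times M^{k+1},\mathcal C')$ from Proposition~\ref{Q of A is an FIO} and $\delta_\Delta\in I^{(k-1)n/4}(M^{k+1},\dot N^*\Delta)$ from Proposition~\ref{delta symbol}. We then read off the order, the Lagrangian, and the half-density symbol, using the half-density composition formula of \cite{MR4376455}.

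\emph{Geometric composition.} We compute $\mathcal C\circ \dot N^*\Delta$ directly. A point $(t,p(x,\xi),G^{-t}(x,\xi);x,\xi)\in\mathcal C$ composes with $(x,\xi)\in\dot N^*\Delta$ exactly when $x=(x_1,\dots,x_1)$ and $\xi_1=-\xi_2-\cdots-\xi_{k+1}$. This gives the stated $\Lambda$, parametrized by $(t,x_1,\xi_2,\dots,\xi_{k+1})$ with $(x,\xi)$ restricted to $\esssupp A$.

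\emph{Transversality.} The fiber product $\mathcal C\times_{T^*M^{k+1}}\dot N^*\Delta$ should be transversal, not merely clean, because $\mathcal C$ is the graph of a map in the variable $(x,\xi)$. Indeed, $\mathcal C$ is parametrized by $(t,x,\xi)$, and its projection to the right factor $T^*M^{k+1}$ is the submersion $(t,x,\xi)\mapsto(x,\xi)$. Hence $\mathcal C\times(\dot N^*\Delta)$ meets $T^*(\R^{k+1}\times M^{k+1})\times\operatorname{diag}(T^*M^{k+1})$ transversally, and the excess is $e=0$. The map from the fiber product to $\Lambda$ is injective: $t$ is recovered from the first slot, and $(x,\xi)$ is recovered by applying $G^{t}$ to the $M^{k+1}$ slot. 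It is also proper, since the relevant $\xi$ avoid zero. Here the $k$-good hypothesis enters: on $\esssupp A$ every $\xi_j\neq0$, including $\xi_1=-\sum_{j\ge2}\xi_j$. Therefore no covector in $\dot N^*\Delta$ with vanishing first component contributes, and no zero-section component appears. The main obstacle is this cleanness/properness check together with the symbol bookkeeping below.

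\emph{Order.} For transversal composition the orders add, giving
\[
-\tfrac{k+1}{4}+\tfrac{(k-1)n}{4}=\tfrac{kn-n-k-1}{4}.
\]
This is consistent with the usual normalization in which the order convention already accounts for the dimensions, as in Hörmander's $I^{m_1}\circ I^{m_2}\subset I^{m_1+m_2}$ for FIOs composed with Lagrangian distributions.

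\emph{Symbol.} By \cite{MR4376455}, when the composition is transversal and $\mathcal C$ is parametrized by $(t,x,\xi)$ with the right projection a coordinate projection, the composed half-density is the product of the two symbol factors. Concretely, $(2\pi)^{(k+1)/4}a(p(x,\xi))|dt\,dx\,d\xi|^{1/2}$ is paired with $(2\pi)^{-(k-1)n/4}|g(x_1)|^{-(k-1)/4}|dx_1\,d\xi_2\cdots d\xi_{k+1}|^{1/2}$. The $|dx\,d\xi|^{1/2}$ factor cancels against the symplectic volume on $T^*M^{k+1}$, leaving $|dt\,dx_1\,d\xi_2\cdots d\xi_{k+1}|^{1/2}$. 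The powers of $2\pi$ then combine; the prefactor should simplify to the stated $(2\pi)^{-(kn-n-k-1)/4}$, and I would verify this with the normalization lemma before Proposition~\ref{delta symbol}. As a sanity check, I would redo the calculation via an explicit phase: take a local phase for each half-wave factor, combine it with the phase $\phi$ of Proposition~\ref{delta symbol}, integrate out $y$ by stationary phase, and compute the Leray density on the resulting critical set. This fixes the constant and confirms the parametrization $(x,\xi)=(x_1,\dots,x_1,-\xi_2-\cdots-\xi_{k+1},\xi_2,\dots,\xi_{k+1})$.
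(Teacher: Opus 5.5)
Your proposal is correct and takes the paper's route. The paper proves this by invoking the transversal-composition lemma (Lemma~\ref{symbol composition lem trans}, from \cite{MR4376455}) with $\lambda=(x_1,\xi_2,\ldots,\xi_{k+1})$, multiplying the two symbols, and using the $k$-good support of $a$ only to guarantee that every $\xi_j\neq 0$. Your submersion argument just spells out the transversality that lemma asserts, and the constant needs no further check: $(2\pi)^{(k+1)/4}(2\pi)^{-(k-1)n/4}=(2\pi)^{-(kn-n-k-1)/4}$ exactly, just as the orders add to $(kn-n-k-1)/4$.
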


Recall from \eqref{strategy first composition} and \eqref{strategy second composition}
that $S : C^\infty(M^{k+1}) \to C^\infty(\R^{k+1})$ is the operator with distribution
kernel $(Q \circ A) \circ \delta_\Delta$. That is,
$S \in I^{(kn-n-k-1)/4}(\R^{k+1} \times M^{k+1}; \mathcal C_S')$, where
\begin{multline}\label{S canonical relation}
	\mathcal C_S
	= \{(t,p(x,\xi); G^{t}(x,\xi)) \in \dot T^*\R^{k+1} \times \dot T^*M^{k+1}: \\
	x = (x_1,\ldots,x_1) \in M^{k+1},\ 
	\xi = (-\xi_2-\cdots-\xi_{k+1},\xi_2,\ldots,\xi_{k+1}) \in \dot T^*_x M^{k+1}, \\
	(x,\xi) \in \esssupp A \}.
\end{multline}
It has the same principal symbol as in Proposition \ref{prop first composition}.
Note that $\mathcal C_S \subset \dot T^*\R^{k+1} \times \dot T^*M^{k+1}$ whenever
$\esssupp A$ is contained in the positive octant, for example if it is $k$-good or
$k$-bad. The wavefront set of $S \circ \delta_\Delta$ can now be computed using the
standard calculus \cite[Chapter 1]{MR451313}. In doing so, certain configurations
appear that we isolate using the following lemma from \cite{MR4376455}, which we
apply in the proof of Proposition \ref{prop first composition}.

Let $Y$ be a smooth, compact manifold without boundary with $\dim Y = n$. Fix a
conic Lagrangian submanifold $\Lambda \subset T^*Y$. Let
$p = (p_1,\ldots,p_{k+1})$ be positive-homogeneous first-order symbols satisfying
$\{p_i, p_j\} = 0$ for all $1 \le i,j \le k+1$. In particular, the Hamilton vector
fields $H_{p_i}$ and $H_{p_j}$ commute for each pair.

Consider the canonical relation
\[
	\mathcal C_p
	= \{(t,p(x,\xi), G^{-t}(x,\xi) ; x,\xi) : t \in \R^{k+1},\ (x,\xi) \in \dot T^*Y \}
	\subset \dot T^*(\R^{k+1} \times Y) \times \dot T^*Y,
\]
where
\[
	G^t = \exp(t_1 H_{p_1}) \circ \cdots \circ \exp(t_{k+1} H_{p_{k+1}})
	= \exp(t_1 H_{p_1} + \cdots + t_{k+1} H_{p_{k+1}}).
\]
In what follows, we fix a parametrization $\lambda \mapsto (x(\lambda),\xi(\lambda))$
of $\Lambda$ by $\lambda=(\lambda_1,\ldots,\lambda_n)\in\R^n$. The composition
\[
	\mathcal C_p \circ \Lambda
	= \{(t,p(x,\xi), G^{-t}(x,\xi)) : t \in \R^{k+1},\ (x,\xi) \in \Lambda \}
\]
then admits a parametrization by $(t,\lambda)$.

\begin{lemma}\label{symbol composition lem trans}
The composition of $\mathcal C_p$ and $\Lambda$ is transversal. Furthermore, fix
homogeneous half-densities
\[
	\sigma_{\mathcal C_p}(t,x,\xi)
	= a(t,x,\xi)\,|dt \, dx \, d\xi|^{1/2}
	\qquad \text{and} \qquad
	\sigma_{\Lambda}(\lambda) = b(\lambda)\,|d\lambda|^{1/2}
\]
on $\mathcal C_p$ and $\Lambda$, respectively. Then the composition
$\sigma_{\mathcal C_p} \circ \sigma_{\Lambda}$ is the half-density
\[
	a(t,x(\lambda),\xi(\lambda))\, b(\lambda)\, |dt \, d\lambda|^{1/2}
\]
via the parametrization of $\mathcal C_p \circ \Lambda$ by $(t,\lambda)$.
\end{lemma}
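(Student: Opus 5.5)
The plan is to verify transversality directly from the definition of the composition of canonical relations, and then read off the half-density by the standard formula for transversal compositions. Write $\mathcal C_p \circ \Lambda$ as the image of the fiber product
\[
	\mathcal F = (\mathcal C_p \times \Lambda) \cap \bigl(T^*(\R^{k+1}\times Y) \times \operatorname{diag}(T^*Y \times T^*Y)\bigr).
\]
Since $\mathcal C_p$ is parametrized by $(t,x,\xi)$ with $(x,\xi)\in \dot T^*Y$, and $\Lambda$ by $\lambda$, a point of $\mathcal C_p\times\Lambda$ is given by $(t,x,\xi,\lambda)$. The diagonal condition is then $(x,\xi)=(x(\lambda),\xi(\lambda))$.

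\textbf{Transversality.} I would show that the map $(t,x,\xi,\lambda)\mapsto (x,\xi)-(x(\lambda),\xi(\lambda))\in T_{(x,\xi)}(T^*Y)$ is a submersion. This is immediate: its differential restricted to the $(x,\xi)$ directions is the identity on the $2n$-dimensional tangent space. Hence the intersection with the diagonal is transversal, and $\mathcal F$ is a smooth manifold of dimension $(k+1)+2n+n-2n=(k+1)+n$, parametrized by $(t,\lambda)$.

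I would then check the remaining requirement, that the projection $\mathcal F\to T^*(\R^{k+1}\times Y)$ is an injective immersion onto $\mathcal C_p\circ\Lambda$. Its image is $(t,p(x(\lambda),\xi(\lambda)),G^{-t}(x(\lambda),\xi(\lambda)))$. Since $t$ is recorded directly and $G^{-t}$ is a diffeomorphism inverting to recover $(x(\lambda),\xi(\lambda))$, and $\lambda$ parametrizes $\Lambda$ injectively, this map is an injective immersion. The commuting assumption $\{p_i,p_j\}=0$ is used only to make $G^t$ a well-defined $\R^{k+1}$-action, so that $\mathcal C_p$ is a canonical relation. I would note that this is taken as given in the setup.

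\textbf{Symbol.} For a transversal composition, the composed half-density is obtained as follows. Take $\sigma_{\mathcal C_p}\otimes\sigma_\Lambda$ on $\mathcal C_p\times\Lambda$, and divide by the pullback of the canonical half-density on $T^*Y\times T^*Y$ relative to the diagonal. Concretely, $\sigma_{\mathcal C_p}\otimes\sigma_\Lambda = a\,b\,|dt\,dx\,d\xi\,d\lambda|^{1/2}$. The defining function of $\mathcal F$ is $(x,\xi)-(x(\lambda),\xi(\lambda))$, and we take $(x,\xi)$ as the complementary coordinates. The Jacobian of this function with respect to $(x,\xi)$ is the identity, with determinant $1$, and the symplectic volume $|dx\,d\xi|$ is coordinate-invariant. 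Therefore the Leray-type quotient leaves exactly $a(t,x(\lambda),\xi(\lambda))\,b(\lambda)\,|dt\,d\lambda|^{1/2}$ on $\mathcal F$. We then push this forward through the diffeomorphism $\mathcal F\to\mathcal C_p\circ\Lambda$.

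\textbf{Main obstacle.} The main obstacle is bookkeeping of normalizations in the half-density composition formula. In particular, one must confirm that the factor coming from the symplectic form on the middle $T^*Y$ is exactly $1$ in canonical coordinates $|dx\,d\xi|$, with no stray $(2\pi)$ powers. I would handle this by computing in canonical local coordinates, where the fiber-product density reduces to the Leray density with unit Jacobian. The Maslov factors and constants are ignored, consistent with the convention in the surrounding propositions.
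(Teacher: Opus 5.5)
Your proposal is correct. The paper does not prove this lemma; it imports it from \cite{MR4376455}. Your argument is the standard one for it: transversality holds because the right projection $(t,x,\xi)\mapsto(x,\xi)$ of $\mathcal C_p$ is a submersion. The composed half-density is then the Leray quotient of $a\,b\,|dt\,dx\,d\xi\,d\lambda|^{1/2}$ by the symplectic half-density $|dx\,d\xi|^{1/2}$ on the middle $T^*Y$, with unit Jacobian in canonical coordinates, and there is no $(2\pi i)^{-e/2}$ normalization factor since the excess is $e=0$.
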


\begin{proof}[Proof of Proposition \ref{prop first composition}]
Let $Y=M^{k+1}$, so $\dim Y=(k+1)n$. We identify the Lagrangian submanifold
$\dot N^{*}\Delta$ from Proposition \ref{delta symbol} with the $\Lambda$ in
Lemma \ref{symbol composition lem trans}, and we take the parameters
$\lambda=(x_1,\xi_2,\ldots,\xi_{k+1})$.

We also take
\[
p(x,\xi)=(p(x_1,\xi_1),p(x_2,\xi_2),\ldots,p(x_{k+1},\xi_{k+1})).
\]
Under the action of $A$, we have $(x,\xi)\in\esssupp A$ only when $p(x,\xi)$ lies
in a $k$-good cone, so each component $p(x_j,\xi_j)$ is positive.

In this setting,
\[
\sigma_{\mathcal C_p}(t,x,\xi)
=(2\pi)^{(k+1)/4} a(p(x,\xi))\,|dt \, dx \, d\xi|^{1/2}
\]
and
\[
\sigma_{\Lambda}(\lambda)
=(2\pi)^{-(k-1)n/4}\frac{1}{|g(x_1)|^{(k-1)/4}}
\,|dx_1 \, d\xi_2 \, d\xi_3 \cdots d\xi_{k+1}|^{1/2}.
\]
The proposition now follows directly from Lemma \ref{symbol composition lem trans}.
\end{proof}

Next, we compute the symbolic data of $S\circ \delta_\Delta$ and use it to obtain
the asymptotic expansion of $\check\mu\,|dt|^{1/2}$.

\begin{proposition}\label{main composition prop}
Suppose the conical support of $a$ is $k$-good. The restriction of the
composition $S \circ \delta_{\Delta}$ to the open cube $(-\inj M, \inj M)^{k+1}$
is a Lagrangian distribution in the class
\[
	I^{kn - 3(k+1)/4}\bigl((-\inj M, \inj M)^{k+1}; \dot T_0^* \R^{k+1}\bigr)
\]
with principal symbol having half-density part
\[
	(2\pi)^{-kn + 3(k+1)/4}\, \vol M\, a(\tau)\, \vol F^{-1}(\tau)\, |d\tau|^{1/2}.
\]
\end{proposition}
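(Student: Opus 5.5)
Our plan is to compose $S$, described by Proposition \ref{prop first composition} and the canonical relation \eqref{S canonical relation}, with $\delta_\Delta$ from Proposition \ref{delta symbol}, using the clean composition calculus of Duistermaat--Guillemin. First we compute the set-theoretic composition $\mathcal C_S\circ \dot N^*\Delta$ over the cube $|t_j|<\inj M$. A point of $\mathcal C_S$ pairs with $\dot N^*\Delta$ exactly when $G^t(x,\xi)$, with $x=(x_1,\dots,x_1)$ and $\xi=(-\xi_2-\cdots-\xi_{k+1},\xi_2,\dots,\xi_{k+1})$, lies in $\dot N^*\Delta$ after the sign convention. That means all geodesic endpoints $\exp(t_jH_p)(x_1,\xi_j)$ share one base point $y$, and their covectors also sum to zero there. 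For $|t_j|<\inj M$ and $k$-good $p(x,\xi)$, we will argue that this forces $t=0$. If some $t_j\ne0$, the points $x_1$ and $y$ are distinct and joined by the unique minimizing geodesic, so each $\xi_j$ is parallel to the initial direction with $t_j|\xi_j|$ equal to $\pm d(x_1,y)$. All the $\xi_j$ are then collinear, and the closing condition $\sum\xi_j=0$ yields $\varepsilon\cdot\tau=0$ for some sign vector, contradicting \eqref{2.2}. Hence the composition is $\{(0,\tau)\}=\dot T_0^*\R^{k+1}$ restricted to the cone, and the fiber over each $(0,\tau)$ is $\{(x_1,\xi):x_1\in M,\ F(\xi)=\tau\}$ in the $T^*_{x_1}M$ coordinates. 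This fiber is compact, and by Proposition \ref{Scaling of F} it is a smooth manifold of dimension $n+d$.

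Next we verify cleanness and compute the excess. The intersection $(\mathcal C_S\times \dot N^*\Delta)\cap(T^*\R^{k+1}\times\operatorname{diag}(T^*M^{k+1}))$ is parametrized by $(x_1,\xi_2,\dots,\xi_{k+1})$ subject to $t=0$. Its dimension is $n+kn$. The tangent space has to match the intersection of the tangent spaces. Linearizing at $t=0$, the condition reduces to the $(k+1)$ derivatives of $y$ vanishing, i.e.\ $\sum_j \delta t_j\,\partial_\xi p(x_1,\xi_j)=0$ in $T_{x_1}M$. The vectors $\partial_\xi p(x_1,\xi_j)=\omega_j$ are linearly independent as functionals in the sense used in the regularity part of Proposition \ref{Scaling of F}; the same nondegeneracy argument forces $\delta t=0$. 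This step is where we expect the main difficulty: we must carefully show the linearized equations cut out exactly the tangent space of the fiber manifold and that the rank is constant, with \eqref{2.2} as the key input. The excess is $e=\dim(\text{fiber})=n+kn-(k+1)$. The order is then $\ord S+\ord\delta_\Delta+e/2$. This should be checked against the claimed $kn-3(k+1)/4$, since the orders in the clean calculus include the dimension normalizations of $M^{k+1}$.

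Finally we compute the symbol. By Duistermaat--Guillemin, in the form of the half-density formula of \cite{MR4376455}, the principal symbol at $(0,\tau)$ is the integral over the fiber of the product of the two symbols, divided by the appropriate Leray-type density. The product of the half-densities $(2\pi)^{-(kn-n-k-1)/4}a|g|^{-(k-1)/4}|dt\,dx_1\,d\xi'|^{1/2}$ and $(2\pi)^{-(k-1)n/4}|g|^{-(k-1)/4}|dx_1\,d\xi'|^{1/2}$ gives a density in $(x_1,\xi)$. Restricting via $\tau=F(\xi)$ produces $a(\tau)|d\tau|^{1/2}$ times $\int_M\int_{F^{-1}(\tau)}d_F$. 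The metric factors combine with the invariant cotangent measure to give $\vol M$, and the covectors taken in $T^*_{x_1}M$ with the $g$-norm yield exactly $\vol F^{-1}(\tau)$. Collecting the powers of $2\pi$ gives $(2\pi)^{-kn+3(k+1)/4}$.
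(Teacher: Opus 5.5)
The overall architecture matches the paper's: compose $\mathcal C_S$ with $\dot N^*\Delta$, isolate the $t=0$ component, use excess $e=(k+1)n-(k+1)$, and integrate the product of symbols over $\{p=\tau\}\subset\dot N^*\Delta$. Your argument that no $t\neq 0$ in the cube contributes is correct. It is also more explicit than the paper's one-line remark, which silently relies on the same collinearity-plus-\eqref{2.2} reasoning. One small slip: the flow of $p=|\xi|_g$ has unit speed, so the relation is $t_j\omega_j=\exp_{x_1}^{-1}(y)$, i.e.\ $|t_j|=d(x_1,y)$, not $t_j|\xi_j|=\pm d(x_1,y)$. This does not affect the conclusion.

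The real gap is the cleanness step. This is exactly where the paper's proof does its work, and you leave it open. The linearized condition you write, $\sum_j\delta t_j\,\omega_j=0$, is not the right one, and it can never force $\delta t=0$. On $\dot N^*\Delta$ one has $\sum_j\xi_j=0$, so $\delta t_j=|\xi_j|$ always solves it.

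The correct linearization at $t=0$ goes as follows.
\begin{itemize}
\item A tangent vector $(\delta t,\delta\lambda,\delta\lambda')$ to the fiber product satisfies $\delta\lambda,\delta\lambda'\in T\dot N^*\Delta$ and $\delta\lambda+\sum_j\delta t_jH_j=\delta\lambda'$. Hence $\sum_j\delta t_jH_j\in T\dot N^*\Delta$.
\item Projecting to $M^{k+1}$, this vector must be tangent to the diagonal, so $\delta t_1\omega_1=\cdots=\delta t_{k+1}\omega_{k+1}$ in $T_{x_1}M$.
\item If this common value is nonzero, all $\xi_j$ are parallel, and $\sum_j\xi_j=0$ gives $\varepsilon\cdot\tau=0$, contradicting \eqref{2.2}. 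Otherwise $\delta t=0$.
\item Then $\delta\lambda=\delta\lambda'$, which is precisely the tangent space of $\{(0,\lambda,\lambda):\lambda\in\dot N^*\Delta\}$. So the composition is clean with the excess you stated.
\end{itemize}

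This transversality of $\mathrm{span}\{H_j\}$ to $T\dot N^*\Delta$ is the only hypothesis of Lemma~\ref{symbol composition lem}. The paper verifies it and then invokes that lemma to get isolation, cleanness, excess, and the fiber-integral symbol formula all at once.

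Your instinct to use Proposition~\ref{Scaling of F} can be salvaged, but only via a step you did not state. Since $\dot N^*\Delta$ is Lagrangian, $\sum_jc_jH_{p_j}\in T\dot N^*\Delta$ if and only if $\sum_jc_j\,dp_j$ vanishes on $T\dot N^*\Delta$. In the parametrization $(x_1,\xi_2,\dots,\xi_{k+1})$, $p$ becomes $F(\xi_2,\dots,\xi_{k+1})$ up to permuting components and a $g(x_1)$-orthonormal change of fiber coordinates. Its differential has rank $k+1$ at $k$-good $\tau$, which forces $c=0$. This duality is what makes the regularity part of that proposition the relevant input; your displayed equation does not express it.

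With that repaired, the remaining bookkeeping closes with no further normalization. Lemma~\ref{d} gives the order directly:
\[
\tfrac{kn-n-k-1}{4}+\tfrac{(k-1)n}{4}+\tfrac{(k+1)n-(k+1)}{2}=kn-\tfrac{3(k+1)}{4}.
\]
For the symbol, the product of the two symbols alone carries $(2\pi)^{-(2kn-2n-k-1)/4}$. The stated power $(2\pi)^{-kn+3(k+1)/4}$ appears only after you include the factor $(2\pi i)^{-e/2}$ from Lemma~\ref{d}, with the leftover power of $i$ absorbed into the Maslov factor.
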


The proof of Proposition \ref{main composition prop} uses the clean composition
calculus of Duistermaat and Guillemin \cite{MR405514}. We first record two lemmas.

Let $Y$, $\Lambda$, and $p$ be as in Lemma \ref{symbol composition lem trans}, and
define
\[
	\mathcal C_{\Lambda}
	= \{(t,p(x,\xi); G^t(x,\xi)) : t \in \R^{k+1},\ (x,\xi) \in \Lambda \}
	\subset \dot T^*\R^{k+1} \times \dot T^*Y.
\]
Assume that the Hamilton vector fields
\[
	H_p = (H_{p_1}, \ldots, H_{p_{k+1}})
\]
are linearly independent and span a subspace that intersects $T\Lambda$ trivially.

\begin{lemma}\label{symbol composition lem}
Let $\mathcal C_{\Lambda}$ and $\Lambda$ be as above. The following are true.
\begin{enumerate}
\item There is an isolated component of
\[
	\mathcal C_{\Lambda} \circ \Lambda
	= \{(t,p(x,\xi)) \in \dot T^* \R^{k+1} : (x,\xi) \in \Lambda,\ G^t(x,\xi) \in \Lambda \}
\]
contained in $\dot T_0^*\R^{k+1}$.
\item At this component, $\mathcal C_{\Lambda}$ and $\Lambda$ compose cleanly
with excess
\[
	e = \dim Y - (k+1).
\]
\item Suppose $\Lambda$ is parametrized by $\lambda = (\lambda_1,\ldots,\lambda_n) \in \R^n$,
and that we have homogeneous half-densities
\[
	\sigma_{\mathcal C_{\Lambda}} = b(t,\lambda)\, |dt \, d\lambda|^{1/2}
	\qquad \text{and} \qquad
	\sigma_{\Lambda} = a(\lambda)\, |d\lambda|^{1/2}
\]
on $\mathcal C_{\Lambda}$ and $\Lambda$, respectively. Then
\[
	\sigma_{\mathcal C_{\Lambda}} \circ \sigma_\Lambda
	= \left(\int_{p^{-1}(\tau)} b(0,\lambda)\, a(\lambda)\, d_p(\lambda) \right) |d\tau|^{1/2}
\]
at $(0,\tau) \in \dot T^*\R^{k+1}$, where $d_p$ denotes the Leray density on
$p^{-1}(\tau)$.
\end{enumerate}
\end{lemma}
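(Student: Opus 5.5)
The plan follows the argument of the analogous lemma in \cite{MR4376455}, adapted to $k+1$ commuting Hamiltonians. Throughout, $\mathcal C_\Lambda\circ\Lambda$ is parametrized by pairs $((t,\lambda),\lambda')$ with $G^t(x(\lambda),\xi(\lambda))=(x(\lambda'),\xi(\lambda'))$. The fiber product to study is
\[
F=\{(t,\lambda,\lambda') : G^t(x(\lambda),\xi(\lambda))=(x(\lambda'),\xi(\lambda'))\}\subset \R^{k+1}\times\R^n\times\R^n .
\]

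\textbf{Part (1).} The set $\{t=0,\ \lambda=\lambda'\}$ lies in $F$, and its image is $\{(0,p(x,\xi)) : (x,\xi)\in\Lambda\}\subset \dot T_0^*\R^{k+1}$. To see that this component is isolated, I use the hypotheses that the $H_{p_j}$ are linearly independent and that their span meets $T\Lambda$ trivially. Then the map $(t,\lambda)\mapsto G^t(x(\lambda),\xi(\lambda))$ has injective differential at $t=0$, since its differential is $(s,\dot\lambda)\mapsto \sum s_jH_{p_j}+d(x,\xi)(\dot\lambda)$. It is therefore an immersion near $t=0$. Hence for small $t\neq0$ the point $G^t(x,\xi)$ cannot return to $\Lambda$ near $(x,\xi)$, and by homogeneity the smallness is uniform on conic neighborhoods. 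So points of $F$ with $t$ near $0$ have $t=0$ and $\lambda'=\lambda$, and this component is open and closed in $F$.

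\textbf{Part (2).} Cleanness means $T F=\{(\dot t,\dot\lambda,\dot\lambda') : \sum\dot t_jH_{p_j}+d(x,\xi)\dot\lambda=d(x,\xi)\dot\lambda'\}$ at points of the component. By the trivial-intersection hypothesis this forces $\dot t=0$ and $\dot\lambda=\dot\lambda'$. So the linearized fiber product has dimension $n$, which equals $\dim F$, and the intersection is clean. For the excess, the general formula is $e=\dim F-\dim(\mathcal C_\Lambda\circ\Lambda)$. The image has dimension $k+1$: it is the set of $\tau=p(x,\xi)$ with $t=0$, and $p|_\Lambda$ is a submersion onto an open cone. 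I expect that submersion property to hold on $\esssupp A$, via regular values as in Proposition~\ref{Scaling of F}, and would build that into the hypotheses. This gives $e=n-(k+1)$. (Here $n$ in the lemma plays the role of $\dim Y$, since $\Lambda$ is Lagrangian in $T^*Y$; that matches the stated $e$.)

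\textbf{Part (3).} I would apply the Duistermaat--Guillemin formula, which says the composed half-density is obtained by integrating $b\,a$ over the fibers of $F\to\mathcal C_\Lambda\circ\Lambda$. On our component these fibers are $\{\lambda : p(x(\lambda),\xi(\lambda))=\tau\}=p^{-1}(\tau)$. The main obstacle is identifying the canonical density on each fiber with the Leray density $d_p$. I would do this in coordinates: complete $\lambda$ to $(z,\tau)$ with $\tau=p(\lambda)$. Write $|d\lambda|=d_p\,|d\tau|$ by the definition \eqref{Leray density definition}. Then track how $|dt\,d\lambda|^{1/2}\otimes|d\lambda'|^{1/2}$, divided by the symplectic normalization on $\dot T^*\R^{k+1}$, reduces on the diagonal to $d_p\otimes|d\tau|^{1/2}$. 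Here $dt$ pairs symplectically with $d\tau$. Checking that the Jacobian factors cancel, exactly as in the $k=2$ computation of \cite{MR4376455}, should give $\bigl(\int_{p^{-1}(\tau)} b(0,\lambda)a(\lambda)\,d_p\bigr)|d\tau|^{1/2}$.
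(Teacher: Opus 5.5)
Your route is the natural one, and it is the route the paper relies on: the paper gives no proof of this lemma and defers to the corresponding lemma in \cite{MR4376455}. Part (1) and the cleanness check in Part (2) are fine. The gap is in the excess computation. You propose to \emph{add} the hypothesis that $p|_\Lambda$ is a submersion, so as written you prove a weaker lemma. No extra hypothesis is needed, because it follows from the stated ones together with the fact that $\Lambda$ is Lagrangian. Suppose $\sum_j c_j\,dp_j$ vanished on $T_{(x,\xi)}\Lambda$ for some $c\neq 0$. Then $\omega(\sum_j c_jH_{p_j},v)=\sum_j c_j\,dp_j(v)=0$ for all $v\in T\Lambda$, so $\sum_j c_jH_{p_j}\in(T\Lambda)^{\omega}=T\Lambda$. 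This vector is nonzero by linear independence, which contradicts the trivial-intersection hypothesis. Alternatively, once the intersection is clean, the image of $T_\gamma C$ is a composition of linear Lagrangian relations, hence Lagrangian and $(k+1)$-dimensional, which forces $dp|_{T\Lambda}$ to be onto. Either way, $e=\dim Y-(k+1)$ holds under the lemma's hypotheses as stated.

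The same symplectic identity is the content you leave unverified in Part (3), where you say ``the Jacobian factors should cancel.'' Your description of the bookkeeping is also off. The normalization in the Duistermaat--Guillemin composition is the symplectic volume of the middle space $T^*Y$, not of $T^*\R^{k+1}$.

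Concretely, write $\lambda=(z,\tau)$ with $\tau=p|_\Lambda$, so that $|d\lambda|=J\,|dz\,d\tau|$ and $d_p=J\,|dz|$. The linearized map $(\dot t,\dot\lambda,\dot\lambda')\mapsto\sum_j\dot t_jH_{p_j}+d\iota(\dot\lambda-\dot\lambda')$ has kernel $T_\gamma C$. Its cokernel is $T(T^*Y)/(\mathrm{span}\,H_p+T\Lambda)$, which $\omega$ identifies with the dual of $T(p^{-1}(\tau))=\mathrm{span}\{\partial_z\}$. Choose $v_1,\dots,v_e$ with $\omega(v_i,\partial_{z_l})=\delta_{il}$. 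In the frame $(H_p,\partial_z,\partial_\tau,v)$ of $T(T^*Y)$, the Gram matrix of $\omega$ has determinant $\pm1$: after permuting rows and columns it is block triangular with $\pm I$ diagonal blocks. This uses $\{p_i,p_j\}=0$, $\omega(H_{p_j},\partial_{z_l})=0$, $|\omega(H_{p_j},\partial_{\tau_m})|=\delta_{jm}$, and $\omega|_{T\Lambda}=0$. Hence this frame has symplectic volume $1$. The fiber density produced by the calculus is therefore $b(0,\lambda)a(\lambda)J\,|dz|=b\,a\,d_p$, which is the stated formula. With these two points supplied, your argument goes through.
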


\begin{lemma}\label{d}
Let $\mathcal C$ and $\Lambda$ compose cleanly with excess $e$. If $A$ is a
Fourier integral operator associated to $\mathcal C$ and $B$ is a Lagrangian
distribution associated to $\Lambda$, then $A \circ B$ is a Lagrangian
distribution associated to $\mathcal C \circ \Lambda$ with order
\[
	\ord(A \circ B) = \ord A + \ord B + \frac{e}{2}.
\]
Furthermore, if $\sigma_A$, $\sigma_B$, and $\sigma_{A \circ B}$ are the
half-density parts of the principal symbols of $A$, $B$, and $A \circ B$,
respectively, then
\[
	\sigma_{A \circ B} = (2\pi i)^{-e/2}\, \sigma_A \circ \sigma_B.
\]
\end{lemma}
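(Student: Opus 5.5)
Lemma \ref{d} is the clean composition theorem of Duistermaat and Guillemin \cite{MR405514} (see also \cite[Theorem 25.2.3]{MR1481433}), specialized to a Lagrangian distribution regarded as an operator from a point. I would reduce to it and check that the normalizations match those in the oscillatory-integral lemma preceding Proposition \ref{delta symbol}.

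\emph{Step 1: local representations.} Using a microlocal partition of unity, I would write the kernel of $A$ locally as $(2\pi)^{-(n_X+n_Y+2N)/4}\int e^{i\varphi(x,y,\theta)}\alpha(x,y,\theta)\,d\theta$, with $\varphi$ nondegenerate and parametrizing $\mathcal C'$. Similarly, I would write $B=(2\pi)^{-(n_Y+2N')/4}\int e^{i\psi(y,\eta)}\beta(y,\eta)\,d\eta$, with $\psi$ parametrizing $\Lambda$. Then $A\circ B$ is an oscillatory integral in $x$ with phase $\Phi(x;y,\theta,\eta)=\varphi(x,y,\theta)+\psi(y,\eta)$, fiber variables $(y,\theta,\eta)$, and amplitude $\alpha\beta$. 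After a standard homogenization of the $y$ variable, $\Phi$ is a homogeneous phase in these fiber variables.

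\emph{Step 2: cleanness of $\Phi$.} I would show that $C_\Phi=\{\Phi'_{y,\theta,\eta}=0\}$ is canonically diffeomorphic to the fiber product $(\mathcal C\times\Lambda)\cap(T^*X\times\operatorname{diag}T^*Y)$. Under this identification, clean intersection of $\mathcal C\times\Lambda$ with $T^*X\times\operatorname{diag}T^*Y$ is equivalent to $\Phi$ being a clean phase function. Its excess, namely the fiber dimension of $C_\Phi\to\Lambda_\Phi=\mathcal C\circ\Lambda$, equals the composition excess $e$.

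\emph{Step 3: order.} I would invoke Hörmander's result for clean phases \cite[Prop.~25.1.5$'$]{MR1481433}. An oscillatory integral with a clean phase of excess $e$, with $N_{\rm tot}$ fiber variables and an amplitude of order $m_{\rm tot}$, is a Lagrangian distribution of order $m_{\rm tot}+N_{\rm tot}/2-n_X/4+e/2$. Here $m_{\rm tot}$ and $N_{\rm tot}$ are the sums of the contributions from $A$ and $B$, plus those from the $y$-integration. Substituting $\ord A=m_A+(2N-n_X-n_Y)/4$ and $\ord B=m_B+(2N'-n_Y)/4$, the $n_Y$ terms cancel against the $y$-integration. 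This leaves $\ord A+\ord B+e/2$.

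\emph{Step 4: symbol, the main obstacle.} Locally, I would split the fiber variables into $e$ directions along the fibers of $C_\Phi\to\Lambda_\Phi$ and complementary directions in which the Hessian of $\Phi$ is nondegenerate. Stationary phase in the nondegenerate directions produces a factor of $(2\pi)^{(\text{rank})/2}$, a signature phase, and $|\det\text{Hess}|^{-1/2}$. The remaining $e$-dimensional integral is exactly the fiber integral that defines the composite half-density $\sigma_A\circ\sigma_B$ (the product half-density on the fiber product, integrated against the canonical density on the fibers). The delicate part is bookkeeping. First, the Leray densities $d_{\varphi'_\theta}$ and $d_{\psi'_\eta}$ must combine with the Hessian determinant into the intrinsic density on the fibers. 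Second, the leftover powers of $2\pi$, together with the factor $i^{-e/2}$ coming from the Maslov and signature conventions, must assemble into $(2\pi i)^{-e/2}$. I would check both points against the linear model, where $\Lambda$ is a conormal bundle and the composition can be computed explicitly. This is the normalization used for Lemma \ref{symbol composition lem} in \cite{MR4376455}, so the constant can be cross-checked there.
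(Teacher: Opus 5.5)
Your proposal is correct and follows essentially the same route as the paper, which gives no proof of Lemma \ref{d} and simply cites the clean composition theorem of Duistermaat--Guillemin \cite{MR405514}. Your Steps 1--4 are the standard proof of that theorem (cf.\ H\"ormander's Theorem 25.2.3), your order count (with the $y$-integration contributing $-n_Y/2$) is right, and with the paper's $(2\pi)^{-(n+2N)/4}$ normalization the $e$ degenerate directions produce exactly $(2\pi)^{-e/2}$. The extra unit $i^{-e/2}$ is immaterial in the paper's application, since all unimodular Maslov factors are fixed at the end by positivity, and your fiber integral in Step 4 relies on the tacit hypothesis in \cite{MR405514} that the fibers over $\mathcal C\circ\Lambda$ are compact.
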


Proofs of Lemma \ref{symbol composition lem} and Lemma \ref{d} can be found in
\cite{MR4376455} and \cite{MR405514}, respectively. We now prove
Proposition \ref{main composition prop}.

\begin{proof}[Proof of Proposition \ref{main composition prop}]
For $1 \le j \le k+1$, let $H_j$ denote the vector field on $T^*M^{k+1}$ whose
$j$th component is $H_p(x_1,\xi_j)$ and whose other components vanish, that is,
\[
	H_j = (0,\ldots,0,H_p(x_1,\xi_j),0,\ldots,0).
\]
These $H_j$ are linearly independent, and the subspace they span consists of
vectors of the form
\[
	(t_1' H_p(x_1,\xi_1),\, t_2' H_p(x_1,\xi_2),\, \ldots,\, t_{k+1}' H_p(x_1,\xi_{k+1})).
\]
We claim that this subspace intersects $T(\dot N^*\Delta)$ trivially. To see
this, suppose
\[
	(t_1' H_p(x_1,\xi_1),\, \ldots,\, t_{k+1}' H_p(x_1,\xi_{k+1})) \in T(\dot N^*\Delta).
\]
In geodesic normal coordinates about $x_1$, the Hamilton vector field has the
form
\[
	H_p(0,\xi) = \sum_{m=1}^n \frac{\xi_m}{|\xi|}\,\frac{\partial}{\partial x_m}.
\]
Therefore, the pushforward of this vector under $T^*M^{k+1} \to M^{k+1}$ equals
\[
	(t_1'\xi_1/|\xi_1|,\, \ldots,\, t_{k+1}'\xi_{k+1}/|\xi_{k+1}|).
\]
Since $T\Delta \subset TM^{k+1}$ consists of diagonal vectors, membership in
$T\Delta$ forces these components to agree. In particular, all $\xi_j$ must be
parallel. On $\dot N^*\Delta$ we also have $\xi_1 = -(\xi_2+\cdots+\xi_{k+1})$, so
parallelism implies the existence of a sign vector
$\varepsilon \in \{\pm 1\}^{k+1}$ with
\[
	\varepsilon_1|\xi_1| + \cdots + \varepsilon_{k+1}|\xi_{k+1}| = 0.
\]
But $(|\xi_1|,\ldots,|\xi_{k+1}|)$ lies in a $k$-good cone on $\esssupp A$, so
this contradicts the nondegeneracy condition \eqref{2.2}. Hence
$t_1'=\cdots=t_{k+1}'=0$, which proves the claim.

Part (1) of Lemma \ref{symbol composition lem} implies that the composition
$\mathcal C_{\Lambda} \circ \dot N^*\Delta$ has an isolated component in
$\dot T_0^*\R^{k+1}$. Moreover, if there exists $t=(t_1,\ldots,t_{k+1})\neq 0$
such that the points $\exp(t_1H_p)(x_1,\xi_1),\ldots,\exp(t_{k+1}H_p)(x_1,\xi_{k+1})$
project to the same base point in $M$, then at least one $|t_j|$ is at least
$\inj M$. Since we restrict to $(-\inj M,\inj M)^{k+1}$, we may therefore ignore
this case.

Part (2) of Lemma \ref{symbol composition lem} gives the excess of the clean
composition at the $t=0$ component,
\[
	e = (k+1)n - (k+1).
\]
Hence,
\begin{align*}
	\ord(S \circ \delta_\Delta)
	&= \ord S + \ord \delta_\Delta + \frac{e}{2} \\
	&= \frac{kn-n-k-1}{4} + \frac{(k-1)n}{4} + \frac{(k+1)n - (k+1)}{2} \\
	&= kn - \frac{3(k+1)}{4},
\end{align*}
as claimed.

By part (3) of Lemma \ref{symbol composition lem}, we have locally
\[
	\sigma_{S} \circ \sigma_{\delta_\Delta}
	= (2\pi)^{(-2kn+2n+k+1)/4}
	\left(\int_{p(x,\xi) = \tau} |g(x_1)|^{-(k-1)/2}\, a(p(x,\xi))\, d_p \right)
	|d\tau|^{1/2}.
\]
Here $(x,\xi)$ denotes the point
\[
(x_1,\ldots,x_1;\ -\xi_2-\cdots-\xi_{k+1},\ \xi_2,\ldots,\xi_{k+1}),
\]
and the integral is taken over the variables $(x_1,\xi_2,\ldots,\xi_{k+1})$
satisfying $p(x,\xi)=\tau$, with respect to the Leray density $d_p$. By a simple
change of variables, we may rewrite this integral as
\begin{equation}\label{almost there}
	a(\tau) \int_M |g(x_1)|^{1/2}
	\left(\idotsint |g(x_1)|^{-k/2}\, \delta(p(x,\xi) - \tau)\, d\xi_2 \cdots d\xi_{k+1}\right)
	dx_1.
\end{equation}
On $\dot N^*\Delta$ we have $\xi_1 = -(\xi_2+\cdots+\xi_{k+1})$, hence
\[
	p(x,\xi) = (|\xi_1|,|\xi_2|,\ldots,|\xi_{k+1}|)
	= (|\xi_2+\cdots+\xi_{k+1}|,|\xi_2|,\ldots,|\xi_{k+1}|).
\]
Thus, up to a permutation of the target coordinates, the constraint
$p(x,\xi)=\tau$ is equivalent to $F(\xi_2,\ldots,\xi_{k+1})=\tau$ with $F$ as in
\eqref{def F}. Correspondingly, the inner integral in \eqref{almost there} is
\[
	\idotsint \delta(F(\xi_2,\ldots,\xi_{k+1}) - \tau)\, d\xi_2 \cdots d\xi_{k+1}
	= \vol F^{-1}(\tau).
\]
Because the $\xi$-integration is taken with the canonical cotangent-fiber density
induced by a $g$-orthonormal coframe, any chart Jacobian factor such as
$|g(x_1)|^{-k/2}$ in \eqref{almost there} is a coordinate artifact and cancels.
Finally,
\[
	\int_M |g(x_1)|^{1/2}\, dx_1 = \vol M.
\]
This completes the proof of Proposition \ref{main composition prop}.
\end{proof}

So far, we have completed the study of these composition operators. Next, we
only need to perform some Fourier transforms to prove
Theorem \ref{main theorem k+1-good}.

\begin{proof}[Proof of Theorem \ref{main theorem k+1-good}]
For convenience, we test $S\circ\delta_{\Delta}$ against the oscillatory
half-density
\[
	\phi_\tau(t) = \check \rho(t)\, e^{-i\langle t, \tau \rangle}\, |dt|^{1/2},
\]
where $\rho$ is as in Theorem \ref{main theorem k+1-good}. On one hand,
\eqref{strategy main composition} yields
\[
	(S \circ \delta_\Delta, \phi_\tau)
	= (2\pi)^{k+1} \int_{\R^{k+1}} e^{-i\langle t, \tau \rangle}\,
	\check \mu_a(t)\, \check \rho(t)\, dt
	= \rho * \mu_a(\tau).
\]

On the other hand, by \cite[(25.1.13)]{MR1481433} and the discussion preceding
it, the product $\check\rho(t)\,(S \circ \delta_\Delta)$ can be written as
\[
	\check\rho(t)\,(S \circ \delta_\Delta)
	= (2\pi)^{-3(k+1)/4}
	\left(\int_{\R^{k+1}} e^{i\langle t,\tau \rangle}\, \nu(\tau)\, d\tau \right)
	|dt|^{1/2},
\]
where, by Proposition \ref{main composition prop},
\[
	\nu(\tau) \equiv \zeta\, \check \rho(0)\, (2\pi)^{-kn + 3(k+1)/4}\,
	\vol M\, a(\tau)\, \vol F^{-1}(\tau)
	\quad \bmod S^{kn-k-2}(\R^{k+1} \setminus 0).
\]
Here $S^{kn-k-2}(\R^{k+1} \setminus 0)$ denotes the symbol class of order
$kn-k-2$, and $\zeta$ is a complex unit coming from the Maslov factors. By
Fourier inversion,
\[
	(S \circ \delta_\Delta, \phi_\tau)
	= (2\pi)^{k+1-3(k+1)/4}\, \nu(\tau)
	= \zeta\, \check \rho(0)\, (2\pi)^{k+1-kn}\,
	\vol M\, a(\tau)\, \vol F^{-1}(\tau)
	+ O(|\tau|^{kn-k-2}).
\]

Choose $a \ge 0$ and $\rho$ nonnegative. Then $\rho * \mu_a(\tau)$ is real and
nonnegative, and the leading term on the right-hand side is real and positive.
Hence $\zeta=1$. Moreover, $\check \rho(0) = (2\pi)^{-(k+1)}$ since $\int \rho =
1$. Therefore,
\[
	\rho * \mu_a(\tau)
	= (2\pi)^{-kn}\, \vol M\, a(\tau)\, \vol F^{-1}(\tau) + O(|\tau|^{kn-k-2}).
\]
The theorem follows after selecting $a \equiv 1$ on the $k$-good cone $\Gamma$
and $a \equiv 0$ outside a $k$-good cone containing $\Gamma$ in its interior.
\end{proof}

\section{A flat-torus example}\label{sec:flat-torus-example}

The following example illustrates the sharpness of the cutoff in Theorem \ref{main theorem k+1-gon-bad}.
\begin{example}\label{torus example}
Let $\T^n = \R^n/2\pi \Z^n$ denote the flat torus. Its standard $L^2$-orthonormal eigenfunctions are the Fourier exponentials
\[
	\varphi_m(x) = (2\pi)^{-n/2} e^{i\langle x, m \rangle} \qquad \text{for $m \in \Z^n$,}
\]
with corresponding frequency $|m|$. For $m_1, m_2, \dots, m_{k+1} \in \Z^n$, the $(k+1)$-fold product satisfies
\begin{align*}
	\langle \varphi_{m_1} \varphi_{m_2} \cdots \varphi_{m_k}, \varphi_{m_{k+1}} \rangle
	&= (2\pi)^{-(k+1)n/2} \int_{\T^n} e^{i\langle x,\, m_1 + m_2 + \cdots + m_k - m_{k+1} \rangle} \, dx \\
	&=
	\begin{cases}
		(2\pi)^{-(k-1)n/2} & \text{if } m_1 + m_2 + \cdots + m_k - m_{k+1} = 0, \\
		0 & \text{otherwise.}
	\end{cases}
\end{align*}
Consequently, the sum \eqref{single sums} equals
\[
	(2\pi)^{-(k-1)n} \#\Bigl\{ (m_1,m_2,\dots,m_k) \in \Z^{kn} : |m_1| = a_1, \dots, |m_k|=a_k,\ |m_1 + m_2 + \cdots + m_k| = a_{k+1} \Bigr\}.
\]
In particular, \eqref{single sums} vanishes unless there exist vectors of lengths $a_1,\dots,a_{k+1}$ that form a closed $(k+1)$-gon (Figure \ref{2}). When $k=2$, the shape of a triangle with side lengths $a_1,a_2,a_3$ is rigid. For $k>2$, the shape of a $(k+1)$-gon with fixed side lengths need not be unique. The resulting polygons may be non-convex and may even self-intersect (Figures \ref{2} and \ref{3}).

This shows that the cutoff in Theorem \ref{main theorem k+1-gon-bad} is essentially sharp. Indeed, fix a nonzero lattice vector $v \in \Z^n$ and integers $r_1,\dots,r_k\ge 1$, and set $m_j=r_j v$ for $1\le j\le k$ and $m_{k+1}=(r_1+\cdots+r_k)v$. Then $|m_j|=r_j|v|$ and $|m_{k+1}|=(r_1+\cdots+r_k)|v|$, so $\lambda_{m_{k+1}}=\lambda_{m_1}+\cdots+\lambda_{m_k}$, while
\[
\bigl|\langle \varphi_{m_1} \varphi_{m_2} \cdots \varphi_{m_k}, \varphi_{m_{k+1}} \rangle\bigr| = (2\pi)^{-(k-1)n/2}.
\]
In particular, there is no decay at the boundary of the $(k+1)$-gon inequality, so the frequency threshold in Theorem \ref{main theorem k+1-gon-bad} is essentially sharp.
\end{example}

\begin{figure}
\includegraphics[width=0.6\textwidth]{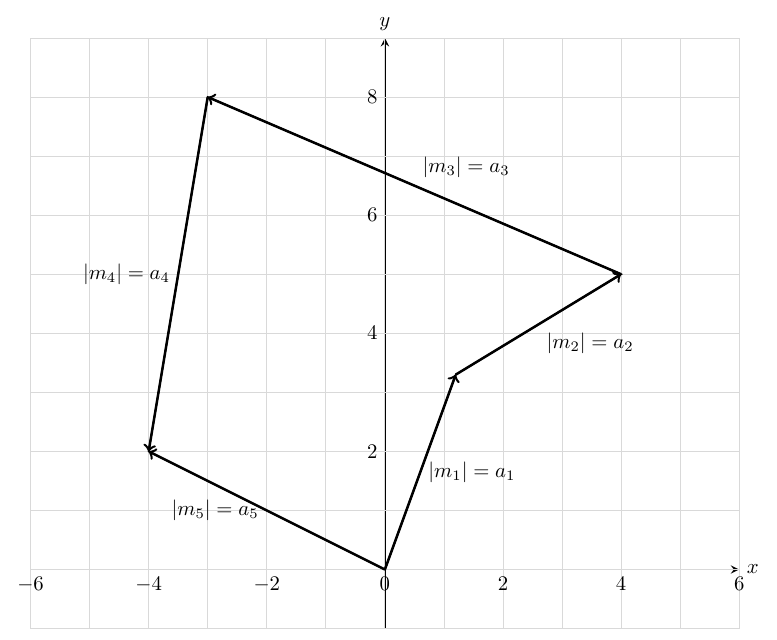}
\caption{A pentagon with side lengths $a_1, \ldots, a_5$.}
\label{2}
\includegraphics[width=0.6\textwidth]{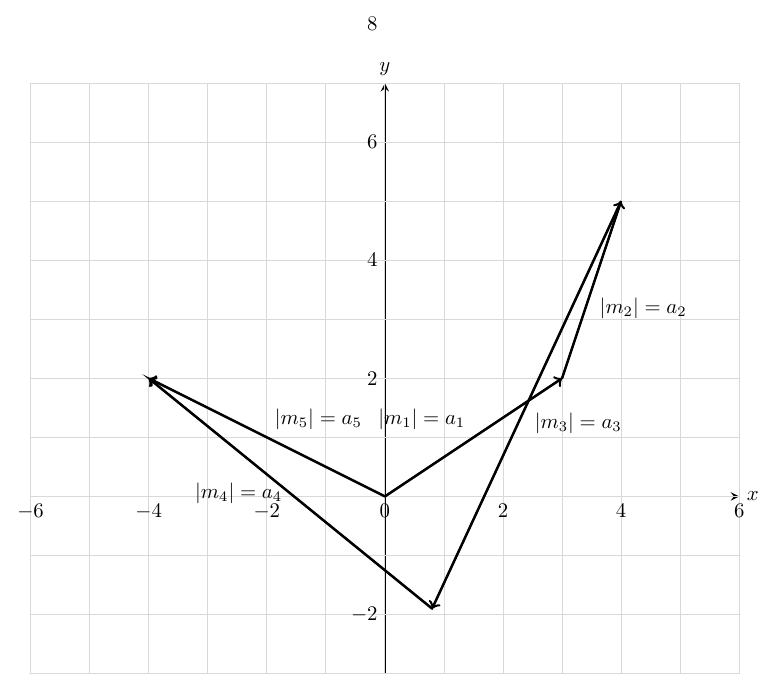}
\caption{A self-intersecting pentagon with the same side lengths.}
\label{3}
\end{figure}

\bibliographystyle{alpha}
\bibliography{references}

\end{document}